\documentclass[twoside, paper=a4, fontsize=10pt]{article} 
\textwidth 148mm 
\textheight 210mm \topmargin 0mm
\setlength{\oddsidemargin}{8mm}
\setlength{\evensidemargin}{8mm}

\usepackage[colorlinks=true,linkcolor=blue,citecolor=blue,urlcolor=blue]{hyperref}
\usepackage[T1]{fontenc}
\usepackage[utf8]{inputenc}
\usepackage[english]{babel}

\usepackage[bbgreekl]{mathbbol}
\usepackage{amsmath,amssymb,amsfonts,amsthm}
\usepackage{mathrsfs}

\usepackage[charter]{mathdesign}

\usepackage[pdftex]{graphicx}	
\usepackage{url}

\usepackage{titlesec}
\titleformat{\section}[hang]{\Large \normalfont \filcenter \scshape}{\thesection}{10pt}{}
\titleformat{\subsection}[hang]{\normalfont \large \bfseries}{\thesubsection}{6pt}{}

\titlespacing*{\section}{0pt}{14pt}{3pt}
\titlespacing*{\subsection}{0pt}{14pt}{3pt}
\titlespacing*{\paragraph}{0pt}{7pt}{8pt}

\usepackage{enumitem}

\usepackage{fancyhdr}
\pagestyle{fancy}
\fancyhead[L,R]{}
\fancyhead[CO]{\footnotesize \slshape Singularities of multiparameter fBm / \nouppercase{\leftmark}}
\fancyhead[CE]{\footnotesize \slshape Singularities of multiparameter fBm / \nouppercase{\leftmark}}	
\fancyfoot[C]{}
\fancyfoot[C]{\thepage}

\setlength{\headheight}{15.6pt}

\usepackage[titletoc]{appendix}
\AtBeginDocument{}
\appendixtocon

\usepackage[numbers,comma,sort]{natbib}
\usepackage{amsmath}
\usepackage{caption}	

\setlength\bibsep{0.0 pt}

\usepackage[affil-it]{authblk}

\numberwithin{equation}{section}

\newtheorem{theorem}{Theorem}[section]
\newtheorem{mainTh}{Theorem}[]
\newtheorem{lemma}[theorem]{Lemma}

\newtheorem{definition}[theorem]{Definition}
\newtheorem{proposition}[theorem]{Proposition}
\newtheorem{remark}[theorem]{Remark}

\begin{document}

\newcommand{\A}{\mathcal{A}}
\newcommand{\C}{\mathcal{C}}
\newcommand{\R}{\mathbb{R}}
\newcommand{\Q}{\mathbb{Q}}
\newcommand{\Z}{\mathbb{Z}}
\newcommand{\N}{\mathbb{N}}
\newcommand{\levy}{\mathscr{B}}
\newcommand{\sheet}{\mathbbm{B}}
\newcommand{\BB}{\boldsymbol{B}}
\newcommand{\alphar}{\texttt{$\boldsymbol{\alpha}$}}
\newcommand{\sigmar}{\texttt{\large $\boldsymbol{\sigma}$}}
\newcommand{\qA}{q_{\underline{\mathcal{A}}}}
\newcommand{\EE}{\mathbb{E}}
\newcommand{\PP}{\mathbb{P}}
\newcommand{\VV}{\mathbb{V}\text{ar}}
\newcommand{\h}{\normalfont{\textit{\textbf{h}}}}
\newcommand{\dH}{\normalfont{\text{dim}}_H}
\newcommand{\F}{\mathbf{F}}
\newcommand{\x}{\mathtt{\mathbf{x}}}
\newcommand{\psiu}{\Psi^{(u)}} 
\newcommand{\psil}{\Psi^{(\ell)}}
\newcommand{\psiut}{\widetilde{\Psi}^{(u)}} 
\newcommand{\psilt}{\widetilde{\Psi}^{(\ell)}}
\newcommand{\etau}{\eta^{(u)}} 
\newcommand{\etal}{\eta^{(\ell)}}

\title{\vspace{-1cm} \normalfont \Large \uppercase {Some singular sample path properties of a multiparameter fractional Brownian motion}}

\author{Alexandre Richard}
\affil{\small \'Ecole Centrale Paris-INRIA Regularity Team and Bar-Ilan University\\  Grande Voie des Vignes, 92295 Châtenay-Malabry, France\\ e-mail: \href{mailto:alexandre.richard@inria.fr}{alexandre.richard@inria.fr}}

\date{}

\maketitle

\begin{abstract}
We obtain a spectral representation and compute the small ball probabilities for a (non-increment stationary) multiparameter extension of the fractional Brownian motion. We derive from these results a Chung-type law of the iterated logarithm at the origin, and exhibit the singular behaviour of this multiparameter fractional Brownian motion, as it behaves very differently at the origin and away from the axes. A functional version of this Chung-type law is also provided.
\end{abstract}

{\sl AMS classification\/}: 60\,F\,17, 60\,G\,60, 60\,G\,17, 60\,G\,15, 60\,G\,22, 28\,C\,20.

{\sl Key words\/}: Fractional Brownian motion, Gaussian random fields, small deviations, spectral representation, Chung's law of the iterated logarithm.

\section{Introduction}

In the 1920's, Khinchine introduced for the first time a law of the iterated logarithm for sums of independent and identically distributed random variables. Thereafter, many works extended this result and in particular \citet{Chung} presented a new law of the iterated logarithm for Brownian motion of the \emph{lim inf} type, thus capturing the slowest local oscillations. This law was generalized over the last few decades in numerous ways to Gaussian \cite{Monrad, KhoshnevisanShi98, tudorxiao07} and non-Gaussian processes \cite{KhoshnevisanLewis, DPLV, CsakiHu}, Gaussian samples \cite{KuelbsLiTalagrand} and empirical processes \cite{Deheuvels}, and Gaussian random fields \cite{Talagrand94,MasonShi,LuanXiao2010}, for a non-exhaustive list of works. Two key steps in establishing Chung-type laws of the iterated logarithm (abbreviated as LIL) are usually to determine small ball probabilities, together with a good decomposition of the process into independent processes. Because of the difficulty to obtain small ball probabilities, finding a Chung-type law is generally more difficult than finding a standard LIL, as can be seen from the example of the fractional Brownian sheet, for which a LIL is given in \cite{Wang07}, but a Chung-type law is far from clear, except in special cases \cite{Talagrand94,MasonShi, BelinskyLinde}.

We propose to study a process which is both a natural extension of the fractional Brownian motion (fBm for short) into a multiparameter process, and an extension of the Brownian sheet into a fractional process. This multiparameter fractional Brownian motion, denoted by $\BB$, is a centred Gaussian process on $\R_+^\nu$, $\nu\in \N^*$, with covariance defined for some Hurst parameter $h\in (0,1/2]$ by:
\begin{equation}\label{eq:cov}
k^{(\nu)}(s,t) = \frac{1}{2}\left(\lambda([0,s])^{2h} + \lambda([0,t])^{2h} - \lambda([0,s]\bigtriangleup [0,t])^{2h}\right)\ ,\ s,t\in\R_+^\nu \ ,
\end{equation}
where $\lambda$ denotes the Lebesgue measure in $\R^\nu$, $[0,t]$ is the rectangle with vertices at $0$ and $t$, and $\bigtriangleup$ is the symmetric difference of sets. This is a special case of a family of covariance on sets introduced by \citet{ehem} to define the set-indexed fractional Brownian motion, and extended in \cite{Richard} to a more general expression as a covariance on $L^2(T,m)$. 
This process differs from the other extensions that are the Lévy fractional Brownian motion, and the fractional Brownian sheet, although it shares several properties with them (see \cite{Richard} for a more thorough discussion on the links between these processes). Besides, $h=1/2$ in (\ref{eq:cov}) yields the usual Brownian sheet. However, the results presented here hold for $h<1/2$ and cannot be extended to $h=1/2$.

Let us make a couple of comments on this process that will help understand the behaviour of its sample paths. \emph{First}, this multiparameter fBm is not increment stationary (unlike processes in \citet{Monrad, Talagrand95, Xiao96} and for a more general theory, \citet{LuanXiao2010} where this is an assumption), which has the following immediate consequences: there is no Ito-Yaglom spectral representation \cite{Yaglom57}, nor any obvious decomposition as a sum of independent processes (as for the Brownian sheet \cite{Talagrand94}), so one of the key steps mentioned in the first paragraph to obtain a Chung-type LIL is missing; and even if a local property such as a LIL can be proven at some point $t_0\in \R_+^\nu$, it cannot be automatically extended to any other point. Note that the same happens for the fractional Brownian sheet: its LIL is known away from $0$ (see \cite{mwx}), but not in the neighbourhood of the origin (except for particular increments as in \cite{Wang07}).
\emph{Second}, the multiparameter fBm behaves very differently on the axes and away from them. On a domain of $\R_+^\nu$ that does not approach the axes, the distance induced by $\BB$, defined by $d_{\BB}(s,t)= \lambda([0,s]\bigtriangleup [0,t])^h$, is equivalent to $d_{X}(s,t) = \|s-t\|^h$, see Lemma \ref{lem:compDist}. The latter is in fact the distance induced by the L\'evy fractional Brownian motion $X$ (studied for example in \cite{Talagrand95}), thus it is expected that these processes will share certain sample path properties, at least away from the axes. This is the purpose of the work of \citet{HerbinXiao}, where the authors propose a modulus of continuity, a law of the iterated logarithm and compute the Hausdorff dimension of the level sets of $\BB$. These results coincide with their analogue for the Lévy fBm $X$, but for the Chung LIL, which is a local result, this is only true away from the axes. This difference originates from the small ball probabilities $\PP\left(\sup_{t\in B(t_0,r)} |\BB_t|\leq \epsilon\right)$, since these quantities differ significantly when $t_0$ is on the axes (Equation (\ref{eq:smallBalls})) or not (Equation (\ref{eq:smallBalls0})), and justifies this notion of singularity at the origin.

As suggested in the previous paragraph, the main new contributions here are a sharp estimate of the small ball probabilities of the multiparameter fBm, and a spectral representation for a large class of $L^2$-indexed Gaussian processes. Given a measure space $(T,m)$ and a mapping $s\mapsto \varphi_s$ defined on a subset of $\R^\nu$ with values in $L^2(T,m)$, a centred Gaussian process in this class is given by the covariance $(s,t)\mapsto \frac{1}{2}\left(\|\varphi_s\|_{L^2}^{4h} + \|\varphi_t\|_{L^2}^{4h} - \|\varphi_s-\varphi_t\|_{L^2}^{4h}\right)$, $h\in (0,1/2]$. Examples of such processes include the multiparameter fBm $\BB$, the Lévy fBm $X$ (see Centsov's construction \cite[Chapter 8]{Samorodnitsky} and \cite{Richard}), but also non-isotropic processes which are variants of the fractional Brownian sheet, etc. The rest of the proofs uses standard techniques, which points to the fact that in order to obtain a Chung-type LIL for any other process in this class of Gaussian processes, it would remain to determine its small ball probabilities and follow the steps exhibited here.

\paragraph{Statement of the main results.}
The spectral representation we obtain is related to stable measures in Banach spaces: we prove in Proposition \ref{prop:radialDecMpfBm} that for any separable Hilbert space $H$, there exists an abstract Wiener space $(H,E,\mu)$ such that for any $h\in(0,1/2)$, there is a strictly stable measure $\Gamma$ on $E$ whose characteristic function is given by $\exp\{-\|S\xi\|_H^{4h}/2\},\ \xi\in E^*$, where $S$ is a map defined in (\ref{eq:defS}). Besides, this measure has a Lévy-Khintchine decomposition, with Lévy measure $\Delta$. This leads to a spectral representation of the multiparameter fBm, which is our first main result: 
\begin{mainTh}\label{prop:spectralRep}
Let $h\in(0,1/2)$ and $\Delta$ be the Lévy measure of Proposition \ref{prop:radialDecMpfBm}, based on the Hilbert space $H=L^2\left([0,1]^\nu,\lambda\right)$. Let $\mathbb{B}$ be a complex Gaussian white noise on the Borel sets of $E$, with control measure $\Delta$, and define the stochastic process $\{\mathscr{B}(\xi),\ \xi\in E^*\}$ by:
\begin{equation*}
\mathscr{B}(\xi) = \int_E \left(1-e^{i\langle \xi,x \rangle}\right)\ \textrm{d}\mathbb{B}_x \ , \ \xi\in E^*\ .
\end{equation*}
Then, the definition of $\mathscr{B}$ extends to $H$, and the process $\left\{\mathscr{B}\left(\mathbf{1}_{[0,t]}\right),\ t\in[0,1]^\nu\right\}$ is a centred Gaussian process with covariance (\ref{eq:cov}), i.e. it is a multiparameter fractional Brownian motion.
\end{mainTh}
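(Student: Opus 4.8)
The plan is to use the Lévy--Khintchine decomposition provided by Proposition~\ref{prop:radialDecMpfBm} to reduce the whole statement to a single white-noise isometry, and then to recover the covariance \eqref{eq:cov} by polarisation. First I would record the identity that the choice of $\Delta$ is designed to produce. The measure $\Gamma$ is strictly stable of index $4h\in(0,2)$ and symmetric (its characteristic function $\exp\{-\|S\xi\|_H^{4h}/2\}$ is real and positive), so its Lévy--Khintchine representation has no Gaussian part and a symmetric Lévy measure $\Delta$, giving \[ \int_E\big(\cos\langle\xi,x\rangle-1\big)\,\mathrm d\Delta=-\tfrac12\|S\xi\|_H^{4h},\qquad \xi\in E^*. \] Since $|1-e^{i\langle\xi,x\rangle}|^2=2\,(1-\cos\langle\xi,x\rangle)$, this is equivalent to \[ \int_E\big|1-e^{i\langle\xi,x\rangle}\big|^2\,\mathrm d\Delta=\|S\xi\|_H^{4h}<\infty,\qquad \xi\in E^*. \] In particular $f_\xi:=1-e^{i\langle\xi,\cdot\rangle}\in L^2(E,\Delta)$, so $\mathscr{B}(\xi)=\int_E f_\xi\,\mathrm d\mathbb{B}$ is a well-defined centred complex Gaussian variable, and $\{\mathscr{B}(\xi)\}_{\xi\in E^*}$ is jointly Gaussian since each $\mathscr{B}(\xi)$ is an $L^2(\Omega)$-limit of linear combinations of the (jointly Gaussian) values of $\mathbb{B}$. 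Because $\Delta$ is symmetric and $f_\xi(-x)=\overline{f_\xi(x)}$, the Hermitian convention $\mathbb{B}(-A)=\overline{\mathbb{B}(A)}$ forces $\mathscr{B}(\xi)$ to be real-valued, so $\{\mathscr{B}(\xi)\}$ is a \emph{real} centred Gaussian process.

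Next I would compute the covariance through increments rather than by expanding $f_\xi\overline{f_\eta}$. From $\mathscr{B}(0)=0$ and the white-noise isometry $\EE|\mathscr{B}(\xi)|^2=\int_E|f_\xi|^2\,\mathrm d\Delta=\|S\xi\|_H^{4h}$, together with $|f_\xi-f_\eta|=|e^{i\langle\eta,\cdot\rangle}-e^{i\langle\xi,\cdot\rangle}|=|1-e^{i\langle\xi-\eta,\cdot\rangle}|=|f_{\xi-\eta}|$ (and $\xi-\eta\in E^*$), I obtain $\EE|\mathscr{B}(\xi)-\mathscr{B}(\eta)|^2=\|S(\xi-\eta)\|_H^{4h}$. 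As the process is real and centred, polarisation gives \[ \EE\big[\mathscr{B}(\xi)\,\mathscr{B}(\eta)\big]=\tfrac12\big(\|S\xi\|_H^{4h}+\|S\eta\|_H^{4h}-\|S(\xi-\eta)\|_H^{4h}\big),\qquad \xi,\eta\in E^*. \]

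It remains to pass to $H$ and to specialise to indicators. Since $S$ is the canonical inclusion $E^*\hookrightarrow H$, it is an $H$-isometry, $\|S\zeta\|_H=\|\zeta\|_H$ for $\zeta\in E^*$, so the covariance on $E^*$ reads $\tfrac12(\|\xi\|_H^{4h}+\|\eta\|_H^{4h}-\|\xi-\eta\|_H^{4h})$ and the increment identity becomes $\|\mathscr{B}(\xi)-\mathscr{B}(\eta)\|_{L^2(\Omega)}=\|\xi-\eta\|_H^{2h}$. As $2h<1$, the map $\xi\mapsto\mathscr{B}(\xi)$ is $2h$-Hölder, hence uniformly continuous, from $(E^*,\|\cdot\|_H)$ into $L^2(\Omega)$; by density of $E^*$ in $H$ it extends uniquely and continuously to all of $H$, and the ($S$-free) covariance formula, being $H$-continuous, extends verbatim. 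Finally, for $s,t\in[0,1]^\nu$ one has $\mathbf{1}_{[0,t]}\in H=L^2([0,1]^\nu,\lambda)$ with $\|\mathbf{1}_{[0,t]}\|_H^2=\lambda([0,t])$ and, since $(\mathbf{1}_A-\mathbf{1}_B)^2=\mathbf{1}_{A\bigtriangleup B}$, $\|\mathbf{1}_{[0,s]}-\mathbf{1}_{[0,t]}\|_H^2=\lambda([0,s]\bigtriangleup[0,t])$, whence \[ \EE\big[\mathscr{B}(\mathbf{1}_{[0,s]})\,\mathscr{B}(\mathbf{1}_{[0,t]})\big]=\tfrac12\big(\lambda([0,s])^{2h}+\lambda([0,t])^{2h}-\lambda([0,s]\bigtriangleup[0,t])^{2h}\big)=k^{(\nu)}(s,t), \] which is precisely \eqref{eq:cov}.

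The crux, conceptually, is the bridge in the first paragraph: everything rests on the fact that the Lévy measure $\Delta$ of Proposition~\ref{prop:radialDecMpfBm} is tailored so that the second moment $\int|f_\xi|^2\,\mathrm d\Delta$ of the white-noise integral reproduces the stable exponent $\|S\xi\|_H^{4h}$, i.e. the fBm variance; making this rigorous requires correctly using the symmetry of $\Delta$ and the absence of a Gaussian component for the index-$4h$ stable law. The remaining delicate point is the passage to $H$: rather than interpreting $\langle\xi,x\rangle$ as a measurable linear functional for $\xi\in H\setminus E^*$ (possible but technical), it is cleaner to \emph{define} $\mathscr{B}$ on $H$ as the $L^2(\Omega)$-continuous extension, which is legitimate by the Hölder estimate and the density of $E^*$, and which suffices because the indicators $\mathbf{1}_{[0,t]}$ lie in $H$.
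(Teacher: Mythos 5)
Your proposal is correct and follows essentially the same route as the paper's proof: both rest on the spectral identity $\|S\xi\|_H^{4h} = 2\int_E\left(1-\cos\langle\xi,x\rangle\right)\Delta(\mathrm{d}x)$ from Proposition \ref{prop:radialDecMpfBm}, compute the increment variance through the white-noise isometry, deduce the covariance, and pass from $E^*$ to $H$ by density before specialising to the indicators $\mathbf{1}_{[0,t]}$. The only minor divergence is the extension step: the paper realises $\mathscr{B}$ on $H$ concretely as $\int_E\left(1-e^{i\langle \mathcal{I}(\varphi),x \rangle}\right)\mathrm{d}\mathbb{B}_x$ via the Paley--Wiener map, whereas you take the abstract uniformly continuous ($2h$-H\"older) extension in $L^2(\Omega)$ --- an equivalent, arguably cleaner device that avoids interpreting $\mathcal{I}(\varphi)$ as a $\Delta$-a.e.\ defined functional.
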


The idea of introducing an infinite-dimensional process to derive properties on $\BB$ will be exploited again for the small deviations. Indeed, a local nondeterminism result established in \cite{Richard} for a large class of Gaussian processes, permits to relate the small ball probabilities of $\BB$ to the metric entropy of small balls as measured by $d_{\BB}$. Thus, away from the axes, the following holds (see more details in Remark \ref{rem:smallBalls}), for $r>0$ and $t_0$ such that $B(t_0,r)\subset (0,\infty)^\nu$, as $\varepsilon \rightarrow 0$:
\begin{align}\label{eq:smallBalls0}
-\log\PP\left(\sup_{t\in B(t_0,r)} |\BB_t|\leq \varepsilon\right)\asymp \left(\frac{r}{\varepsilon^{1/h}}\right)^\nu \ .
\end{align}
This is very different from the result for $t_0=0$:
\begin{mainTh}\label{th:smallBalls}
For $h<1/2$, there are constants $\kappa_1>0$ and $\kappa_2>0$ such that for any fixed $r\in (0,1)$ and any $\varepsilon$ small enough (compared to $r$),
\begin{align}\label{eq:smallBalls}
\exp\left\{-\kappa_2 \frac{r^{\nu^2}}{\varepsilon^{\nu/h}}\right\} \leq \PP\left(\sup_{t\in [0,r]^\nu} |\BB_t|\leq \varepsilon \right) \leq \exp\left\{ -\kappa_1 \frac{r^{\nu^2}}{\varepsilon^{\nu/h}}\right\}\ .
\end{align}
\end{mainTh}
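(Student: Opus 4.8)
The plan is to reduce both inequalities to a single metric-entropy estimate for the indexing cube under the canonical metric $d_\BB(s,t) = \lambda([0,s]\bigtriangleup[0,t])^h$, and then to feed this estimate into the general correspondence between Gaussian small deviations and metric entropy that the local nondeterminism of \cite{Richard} makes available. The first move isolates the role of $r$ by scaling: since $\lambda([0,rs]\bigtriangleup[0,rt]) = r^\nu\,\lambda([0,s]\bigtriangleup[0,t])$, the covariance (\ref{eq:cov}) obeys $k^{(\nu)}(rs,rt) = r^{2h\nu}k^{(\nu)}(s,t)$, so the two centred Gaussian fields $\{\BB_{rt}\}_{t\in[0,1]^\nu}$ and $\{r^{h\nu}\BB_t\}_{t\in[0,1]^\nu}$ have the same law. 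Consequently $\PP(\sup_{[0,r]^\nu}|\BB|\le\varepsilon) = \PP(\sup_{[0,1]^\nu}|\BB|\le\varepsilon r^{-h\nu})$, and it suffices to prove $-\log\PP(\sup_{[0,1]^\nu}|\BB|\le\delta)\asymp\delta^{-\nu/h}$; substituting $\delta=\varepsilon r^{-h\nu}$ (which is small precisely when $\varepsilon$ is small compared to $r$, as $r<1$) then produces the factor $r^{\nu^2}$ of (\ref{eq:smallBalls}). Note this also explains the singularity: an interior ball $B(t_0,r)$ carries the metric $\|\cdot\|^h$ with $r$-independent constants, hence yields $r^\nu$ as in (\ref{eq:smallBalls0}), whereas the corner $[0,r]^\nu$ inherits the extra power through the degeneracy of $d_\BB$ along the axes.

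The core is the order of the covering number $N(\epsilon)$ of $([0,1]^\nu, d_\BB)$. For the upper bound I would use that the symmetric difference of two boxes in $[0,1]^\nu$ is contained in $\nu$ coordinate slabs, so that $\lambda([0,s]\bigtriangleup[0,t])\le \sum_i|s_i-t_i|\le C\|s-t\|$ and hence $d_\BB(s,t)\le C\|s-t\|^h$ throughout the cube; a Euclidean grid of mesh $\asymp\epsilon^{1/h}$ is therefore a $d_\BB$-net, giving $N(\epsilon)\le C\epsilon^{-\nu/h}$. For the matching lower bound I would restrict to the bulk sub-cube $[1/2,1]^\nu$, where Lemma \ref{lem:compDist} supplies the reverse comparison $d_\BB(s,t)\ge c\|s-t\|^h$: a $d_\BB$-ball of radius $\epsilon$ then has Euclidean radius $\le C\epsilon^{1/h}$, so at least $c\epsilon^{-\nu/h}$ of them are needed to cover it, whence $N(\epsilon)\asymp\epsilon^{-\nu/h}$. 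The degeneracy of $d_\BB$ near the axes only makes that region cheaper to cover and so does not affect the order.

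With the entropy estimate in hand, the lower bound on the probability (the $\kappa_2$ inequality) follows from the classical lower bound for Gaussian small deviations in terms of the \emph{upper} metric entropy $N(\epsilon)\le C\epsilon^{-\nu/h}$, which in the regime of polynomially growing covering numbers gives $-\log\PP(\sup_{[0,1]^\nu}|\BB|\le\delta)\le C\delta^{-\nu/h}$. For the upper bound on the probability (the $\kappa_1$ inequality) I would invoke the strong local nondeterminism of \cite{Richard}, again restricting to the bulk, which is legitimate because $\PP(\sup_{[0,1]^\nu}|\BB|\le\delta)\le\PP(\sup_{[1/2,1]^\nu}|\BB|\le\delta)$. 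Choosing a maximal $A\delta$-separated family $\{t_1,\dots,t_M\}$ in $([1/2,1]^\nu, d_\BB)$, whose cardinality is $M\asymp\delta^{-\nu/h}$, local nondeterminism bounds $\VV(\BB_{t_i}\mid \BB_{t_1},\dots,\BB_{t_{i-1}})$ from below by $c(A\delta)^2$, so each conditional probability $\PP(|\BB_{t_i}|\le\delta\mid\text{past})$ is at most a constant $\theta<1$ once $A$ is taken large; multiplying over $i$ yields $\PP(\sup_{[1/2,1]^\nu}|\BB|\le\delta)\le\theta^M=\exp(-\kappa_1\delta^{-\nu/h})$.

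The delicate point, and the reason the statement is genuinely a local one, is the interplay between the degeneration of $d_\BB$ near the axes and the nondeterminism constants. For the upper bound on $\PP$ I must work on a region where the comparison $d_\BB\asymp\|\cdot\|^h$ of Lemma \ref{lem:compDist} and the local nondeterminism of \cite{Richard} both hold with constants I control, while simultaneously ensuring that this region already supports a $d_\BB$-packing of the full order $\delta^{-\nu/h}$; verifying that these constants and the separation radius $A$ can be chosen uniformly, and that the scaling step does not erode them, is where the real care lies. By contrast, the precise $r$-dependence $r^{\nu^2}$ is then automatic from the scaling identity recorded at the outset.
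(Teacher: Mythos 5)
Your proposal is correct and follows essentially the same route as the paper: self-similarity of $\BB$ to reduce $[0,r]^\nu$ to $[0,1]^\nu$ and extract the factor $r^{\nu^2}$, the two-sided entropy estimate $N([0,1]^\nu,d_h,\varepsilon)\asymp\varepsilon^{-\nu/h}$ proved exactly as in Lemma \ref{lem:entropy} (global upper bound via $d_\lambda\leq M_1 d_E$, lower bound on the bulk $[1/2,1]^\nu$ via Lemma \ref{lem:compDist}), and the correspondence between small deviations and metric entropy coming from local nondeterminism. The only difference is presentational: where you sketch proofs of both directions of that correspondence (Talagrand-type lower bound plus the packing/conditional-variance argument), the paper simply cites it as Theorem 4.6 of \cite{Richard}, applied through the isometry between $([0,1]^\nu,d_h)$ and the corresponding subset of $H(k)$.
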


\noindent Theorems \ref{prop:spectralRep} and \ref{th:smallBalls} provide the two key ingredients to prove a lower and an upper bound in a Chung-type LIL. The modulus in the lower bound will be: \[\psil(r) = r^{\nu h}\ \psilt(r) =  r^{\nu h}\ (\log\log r^{-1})^{-h/\nu}\ ,\]while the modulus for the upper bound will be $\psiu(r) = r^{\nu h}\ \psiut(r)$, where $\psiut$ is an increasing function started at $0$ and such that $\psiut \geq \psilt$, so that in particular, $\psiut(r)^{-1} = o(r^{-\nu h})$ as $r\rightarrow 0$. The existence of $\psiut$ is proven in Section \ref{sec:ChungLIL}, and related implicitly to the following decay function of $\Delta$:
\begin{equation}\label{eq:defF}
\F(\x) = \sup_{\varphi \in A(1)} \int_{\|x\|_E < \x } \left(1-\cos\langle\mathcal{I}(\varphi),x\rangle\right) \ \Delta(\text{d}x) \ ,
\end{equation}
where $A(1)$ is a compact subset of $H$, defined in the sequel. Note that $\F$ may depend on $\nu$, hence so does $\psiut$. Let us finally define $M(r) = \sup_{t\in[0,r]^\nu} |\BB_t|, \ r\in[0,1]$.
\begin{mainTh}\label{th:LIL}
Let $h\in (0,1/2)$ and let $M$, $\psil$ and $\psiu$ be as above. Then we have almost surely:
\begin{equation*}
\liminf_{r\rightarrow 0^+} \frac{M(r)}{r^{\nu h} \psilt(r)} \geq \kappa_1^{h/\nu} \ \text{ and }\ \liminf_{r\rightarrow 0^+} \frac{M(r)}{r^{\nu h} \psiut(r)} \leq \kappa_2^{h/\nu} \ ,
\end{equation*}
where $0<\kappa_1 \leq \kappa_2<\infty$ are the constants appearing in the small deviations (see Equation (\ref{eq:smallBalls})).
\end{mainTh}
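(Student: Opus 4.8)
The plan is to prove the two inequalities separately, both through the Borel--Cantelli lemmas evaluated along a geometric sequence $r_n=\theta^n\downarrow 0$ with $\theta\in(0,1)$. The lower bound needs only the convergence (first) Borel--Cantelli lemma together with the small ball \emph{upper} bound of Theorem \ref{th:smallBalls}, whereas the upper bound requires the divergence (second) lemma and hence genuine independence, which this non-increment-stationary process does not possess directly; the idea is to manufacture that independence from the spectral representation of Theorem \ref{prop:spectralRep}. This asymmetry is exactly why the upper bound is the hard part and why its modulus must be enlarged from $\psilt$ to $\psiut$.

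For the lower bound I would fix $\delta>0$ and set $\varepsilon_n=(1-\delta)\kappa_1^{h/\nu}\,r_n^{\nu h}\psilt(r_n)$. With $\psilt(r)=(\log\log r^{-1})^{-h/\nu}$ one computes $\varepsilon_n^{\nu/h}=(1-\delta)^{\nu/h}\kappa_1\,r_n^{\nu^2}(\log\log r_n^{-1})^{-1}$, so the exponent in \eqref{eq:smallBalls} reads $\kappa_1 r_n^{\nu^2}/\varepsilon_n^{\nu/h}=(1-\delta)^{-\nu/h}\log\log r_n^{-1}$. Hence $\PP(M(r_n)\le\varepsilon_n)\le(\log r_n^{-1})^{-(1-\delta)^{-\nu/h}}\asymp n^{-(1+\eta)}$ for some $\eta>0$, which is summable, so the first Borel--Cantelli lemma gives $M(r_n)>\varepsilon_n$ for all large $n$ almost surely. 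Since $r\mapsto M(r)$ is nondecreasing and $r\mapsto r^{\nu h}\psilt(r)$ is regularly varying of index $\nu h$, interpolating over $r\in[r_{n+1},r_n]$ produces a factor $\theta^{\nu h}$; letting $\theta\uparrow 1$ and $\delta\downarrow 0$ then yields $\liminf_{r\to 0}M(r)/(r^{\nu h}\psilt(r))\ge\kappa_1^{h/\nu}$.

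For the upper bound I would start from $\levy(\mathbf{1}_{[0,t]})=\int_E(1-e^{i\langle\mathbf{1}_{[0,t]},x\rangle})\,\mathrm d\mathbb{B}_x$ and partition $E$ according to the size of $\|x\|_E$ into disjoint shells $D_n$, producing the \emph{independent} centred Gaussian processes $\BB^{(n)}_t=\int_{D_n}(1-e^{i\langle\mathbf{1}_{[0,t]},x\rangle})\,\mathrm d\mathbb{B}_x$ with $\BB=\sum_n\BB^{(n)}$. Choosing the shells so that $\BB^{(n)}$ carries the bulk of the fluctuations of $\BB$ at scale $r_n$, I would transfer the small ball \emph{lower} bound of \eqref{eq:smallBalls} to the individual pieces (the local nondeterminism underlying Theorem \ref{th:smallBalls} should guarantee a bound of the same order) and consider the independent events $A_n=\{\sup_{t\in[0,r_n]^\nu}|\BB^{(n)}_t|\le(1+\delta)\kappa_2^{h/\nu}r_n^{\nu h}\psiut(r_n)\}$. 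The same computation as above, now with the constant $\kappa_2$ in place of $\kappa_1$ and the larger modulus $\psiut$, makes $\sum_n\PP(A_n)=\infty$, so the second Borel--Cantelli lemma places us on $A_n$ infinitely often.

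The main obstacle, and the reason the modulus must be enlarged to $\psiut$, is controlling the remainder $\BB_t-\BB^{(n)}_t=\sum_{m\ne n}\BB^{(m)}_t$ uniformly over $[0,r_n]^\nu$: on $A_n$ one controls the $n$-th piece, but one must show the other shells contribute negligibly at scale $r_n$. The low-$\|x\|_E$ shells are precisely the ones measured by the decay function $\F$ of \eqref{eq:defF}, since $\EE|\BB^{(m)}_t|^2$ is the integral of $2(1-\cos\langle\mathbf{1}_{[0,t]},x\rangle)$ against $\Delta$ restricted to $D_m$; I would therefore \emph{define} $\psiut$ through the decay rate of $\F$ so that a Gaussian concentration (Borell-type) inequality, combined with a second, convergent Borel--Cantelli argument for the remainder, bounds $\sup_{t\in[0,r_n]^\nu}|\BB_t-\BB^{(n)}_t|$ by $o(r_n^{\nu h}\psiut(r_n))$ for all large $n$ almost surely. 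Intersecting the almost sure remainder estimate with the infinitely-often occurrence of $A_n$ gives infinitely many $n$ with $M(r_n)\le(1+2\delta)\kappa_2^{h/\nu}r_n^{\nu h}\psiut(r_n)$; letting $\delta\downarrow 0$ yields $\liminf_{r\to 0}M(r)/(r^{\nu h}\psiut(r))\le\kappa_2^{h/\nu}$ and completes the proof.
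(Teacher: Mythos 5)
Your lower bound is correct and is essentially the paper's own argument (geometric sequence, small-deviation upper bound from Theorem \ref{th:smallBalls}, first Borel--Cantelli lemma, monotonicity of $M$ to interpolate). Your upper-bound strategy --- shell decomposition of the spectral representation of Theorem \ref{prop:spectralRep} into independent processes, second Borel--Cantelli lemma for the shell pieces, remainder controlled through $\F$ and Gaussian concentration --- is also exactly the paper's plan. However, two of your steps, as stated, do not go through.

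\emph{First gap: the transfer of the small-ball lower bound to the pieces.} You claim that ``the local nondeterminism underlying Theorem \ref{th:smallBalls} should guarantee a bound of the same order'' for $\BB^{(n)}$. This is unsupported: $\BB^{(n)}$ is not a multiparameter fBm, its incremental variance is strictly smaller than that of $\BB$, and neither Theorem \ref{th:smallBalls} nor the local nondeterminism result of \cite{Richard} applies to it; re-proving a small-ball lower bound for every shell process, with constants uniform in $n$, would be a substantial new argument. The paper gets this step for free from Anderson's correlation inequality \cite{Anderson}: since $\BB=\BB^{(n)}+\tilde{\BB}^{(n)}$ is a decomposition into \emph{independent} centred Gaussian processes, the covariance operator of $\tilde{\BB}^{(n)}$ is positive semi-definite, hence
$\PP\bigl(\sup_{t\in[0,r_n]^\nu}|\BB^{(n)}_t|\leq u\bigr)\geq \PP\bigl(\sup_{t\in[0,r_n]^\nu}|\BB_t|\leq u\bigr)$,
and the divergence of the series transfers directly from $\BB$ to $\BB^{(n)}$. \emph{Second gap: the geometric sequence.} For the remainder $\tilde{\BB}^{(n)}$ (the integral over $\|x\|_E\notin(a_n,a_{n+1}]$) to be negligible at scale $r_n$, Lemma \ref{lem:decoup} requires both $\F(a_n r_n^{\nu/2})\rightarrow 0$ (low shells, inequality (\ref{eq:trunc1})) and $a_{n+1}r_n^{\nu/2}\rightarrow\infty$ (high shells, inequality (\ref{eq:trunc2}), which you do not mention but which is equally needed). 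Applying the first condition at index $n+1$ and combining with the second forces $(r_n/r_{n+1})^{\nu/2}\rightarrow\infty$, i.e.\ $r_{n+1}/r_n\rightarrow 0$: no geometric sequence $r_n=\theta^n$ can satisfy both constraints, so your construction collapses at the remainder-control step. The paper's recursion (\ref{eq:defRk}) implements precisely this forced super-geometric decay, and that decay is the very reason the modulus must be enlarged: along such a sequence $\log\log r_k^{-1}$ grows faster than $\log k$, so the divergence $\sum_k \PP(A_k)=\infty$ holds only with $\psiut(r_k)=(\log k)^{-h/\nu}$ as in (\ref{eq:defPsiut}), not with $\psilt(r_k)=(\log\log r_k^{-1})^{-h/\nu}$. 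In other words, your opening frame (a common sequence $r_n=\theta^n$ for both bounds, ``the same computation as above'') is not a presentational shortcut but an impossibility; repairing it leads one back to the paper's coupled choice of $(\epsilon_k)$, $(r_k)$, $(a_k)$ and the implicit definition of $\psiut$.
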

\noindent This result is not sharp \emph{a priori}, and depends on the rate of decay of $\F$. We discuss how this gap could be filled at the end of Section \ref{sec:ChungLIL}. In Remark \ref{rem:diffLIL}, we comment on the leading term $r^{\nu h}$, compared to $r^h$ for the Chung LIL of $\BB$ away from $0$. This is a direct consequence of the difference in the small ball probabilities (\ref{eq:smallBalls0}) and (\ref{eq:smallBalls}).

\vspace{0.2cm}

In \citet{Strassen(1964)}, while looking for an invariance principle for scaled random walks, the author obtained the fact that the same scaling on a Brownian motion gives a family of processes which is almost surely relatively compact in the unit ball of $H^1_0$, the Sobolev space of continuous functions started at $0$ with square-integrable weak derivative. Functional laws of the iterated logarithm have now been widely studied in the literature: \citet{Csaki} was the first to get a rate of convergence for certain functions in this unit ball, and this result was extended by \citet{deAcosta83} to scaled random walks, for any function of the unit ball of the RKHS (with radius strictly smaller than $1$). After several contributions, \citet{KuelbsLiTalagrand} finally brought a new understanding of the rate of convergence towards the unit sphere in the general frame of Gaussian samples in Banach spaces. Similarly to the standard LIL, the functional result for fractional Brownian motion was also given by \citet{Monrad}. So for the multiparameter fBm, let us define, for $r\in (0,1)$,
\begin{equation*}
\etal_r(t) = \frac{\BB(rt)}{r^{\nu h}\sqrt{\log\log(r^{-1})}} \ , \forall t\in[0,1]^\nu 
\end{equation*}
and
\begin{equation*}
\etau_r(t) = \frac{\BB(rt)}{r^{\nu h} \left(\psiut(r)\right)^{-\nu/2h}} \ , \forall t\in[0,1]^\nu 
\end{equation*}
the lower and upper rescaled multiparameter fBm for which we seek an invariance principle. 

\begin{mainTh}\label{th:functLIL}
Let $h\in(0,1/2)$ and let $H^\nu$ denote the reproducing kernel Hilbert space of $k^{(\nu)}$ (defined in (\ref{eq:cov})). Let $\varphi\in H^\nu$ being of norm strictly smaller than $1$. Then, there exist two positive and finite constants $\gamma^{(\ell)}(\varphi)$ and $\gamma^{(u)}(\varphi)$ such that, almost surely,
\begin{align*}
&\liminf_{r\rightarrow 0^+} \ \psilt(r)^{-1-\nu/2h} \sup_{t\in[0,1]^\nu}|\etal_r(t) - \varphi(t)| \geq \gamma^{(\ell)}(\varphi) \\
&\liminf_{r\rightarrow 0^+} \ \psiut(r)^{-1-\nu/2h} \sup_{t\in[0,1]^\nu}|\etau_r(t) - \varphi(t)| \leq \gamma^{(u)}(\varphi)\ .
\end{align*}

\end{mainTh}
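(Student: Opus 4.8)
The plan is to read Theorem~\ref{th:functLIL} as the Cameron--Martin shifted version of the Chung-type law of Theorem~\ref{th:LIL}, which is recovered at $\varphi=0$, and to rerun the two Borel--Cantelli arguments behind Theorem~\ref{th:LIL} with a shift by $\varphi$ inserted into the small-ball estimates. The reduction rests on the exact self-similarity $\{\BB(rt)\}_t\overset{d}{=}\{r^{\nu h}\BB(t)\}_t$, immediate from $\lambda([0,rs]\bigtriangleup[0,rt])=r^\nu\lambda([0,s]\bigtriangleup[0,t])$ and (\ref{eq:cov}). Writing $\theta_\ell(r)=\psilt(r)^{-\nu/2h}=\sqrt{\log\log r^{-1}}$ and $\theta_u(r)=\psiut(r)^{-\nu/2h}$, this scaling gives, for instance, $\sup_t|\etal_r(t)-\varphi(t)|\overset{d}{=}\theta_\ell(r)^{-1}\sup_t|\BB(t)-\theta_\ell(r)\varphi(t)|$, so that both assertions become statements about shifted small balls $\PP(\sup_t|\BB(t)-\theta\varphi(t)|\le\eta)$ in the coupled regime $\theta\to\infty$, $\eta\asymp\theta^{-2h/\nu}$.

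First I would establish a two-sided shifted small-ball estimate in this regime. The lower bound is the Cameron--Martin inequality combined with Anderson's inequality, $\PP(\sup_t|\BB(t)-\theta\varphi(t)|\le\eta)\ge\exp(-\tfrac12\theta^2\|\varphi\|_{H^\nu}^2)\,\PP(M(1)\le\eta)$, which is legitimate since $\varphi\in H^\nu$; the matching upper bound is the corresponding shifted estimate in the spirit of \citet{KuelbsLiTalagrand} and \citet{deAcosta83}, obtained by projecting onto the Cameron--Martin direction carried by $\varphi$ and applying the centred small ball to the orthogonal part. Feeding in Theorem~\ref{th:smallBalls} (at $r=1$) and setting $\eta=\gamma\,\psilt(r)$ (resp.\ $\gamma\,\psiut(r)$), both sides read $-\log\PP\asymp(\tfrac12\|\varphi\|_{H^\nu}^2+\kappa_i\gamma^{-\nu/h})\,\theta^2$, with $\kappa_i$ the relevant small-ball constant. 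The competition between the shift cost $\tfrac12\|\varphi\|_{H^\nu}^2$ and the small-ball cost $\kappa_i\gamma^{-\nu/h}$ fixes the critical constants; the hypothesis $\|\varphi\|_{H^\nu}<1$ is exactly what keeps $\gamma^{(\ell)}(\varphi),\gamma^{(u)}(\varphi)\in(0,\infty)$, and one checks that at $\varphi=0$ the formulas collapse to the constants $\kappa_1^{h/\nu},\kappa_2^{h/\nu}$ of Theorem~\ref{th:LIL}.

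For the first (lower) inequality I would apply the first Borel--Cantelli lemma along $r_n=q^n$: the upper bound above makes $\PP(\sup_t|\etal_{r_n}(t)-\varphi(t)|<\gamma\,\psilt(r_n)^{1+\nu/2h})$ decay like $n^{-(1+\delta)}$ as soon as $\gamma<(\kappa_1/(1-\tfrac12\|\varphi\|_{H^\nu}^2))^{h/\nu}=:\gamma^{(\ell)}(\varphi)$, hence summable. Passing from $(r_n)$ to all $r\in(0,1)$ is the same monotonicity/continuity step as in Theorem~\ref{th:LIL}, using the uniform modulus of continuity of $\BB$ (as in \cite{HerbinXiao}) to prevent the deviation from dropping below the threshold across a block $[r_{n+1},r_n]$. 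No independence is needed here, which is why this direction carries the smaller modulus $\psilt$.

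The second (upper) inequality is the delicate one and is where I expect the main obstacle. It requires the events $\{\sup_t|\etau_{r_n}(t)-\varphi(t)|\le\gamma\,\psiut(r_n)^{1+\nu/2h}\}$ to occur infinitely often, so the divergence $\sum_n\PP(\cdot)=\infty$ (from the Cameron--Martin lower bound with constant $\kappa_2$) must be coupled with independence across $n$; yet $\BB$ admits no increment-stationary independent decomposition. Here I would reuse the white-noise spectral representation of Theorem~\ref{prop:spectralRep}: integrating $\mathbb{B}$ over disjoint regions $(E_n)$ of $E$ produces independent fields $\BB_n$, and allocating the regions by the magnitude $\|x\|_E$ makes $\BB_n$ the dominant contribution at scale $r_n$. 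The residual $\BB-\BB_n$ is precisely what the decay function $\F$ of (\ref{eq:defF}) measures, and the requirement that this cross-scale leakage stay negligible compared with $\psiut(r_n)^{1+\nu/2h}$ is what forces the enlarged modulus $\psiut\ge\psilt$ and prevents the two constants from coinciding. Once the shifted good events have been replaced, up to this controlled error, by independent events of non-summable probability, the second Borel--Cantelli lemma yields $\gamma^{(u)}(\varphi)=(\kappa_2/(1-\tfrac12\|\varphi\|_{H^\nu}^2))^{h/\nu}$. The two hardest points are therefore the sharp upper bound in the shifted small-ball estimate in the coupled regime, and the quantitative control of the spectral remainder by $\F$ once the shift by $\theta\varphi$ is present.
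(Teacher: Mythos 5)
Your overall architecture is the same as the paper's: self-similarity reduces both inequalities to shifted small-ball estimates; the shift cost is obtained from the Gaussian shifted-convex-set asymptotics (the paper invokes exactly the two-sided statement you sketch, as Proposition \ref{prop:shifted}, rather than re-deriving it from Anderson plus a projection argument) combined with Theorem \ref{th:smallBalls}; the lower bound goes through the first Borel--Cantelli lemma; and your upper-bound scheme --- independent fields obtained by integrating $\mathbb{B}$ over disjoint radial shells $\{\|x\|_E\in I_k\}$, second Borel--Cantelli for the shifted events, control of the residual through $\F$, which is what forces the enlarged modulus $\psiut$ --- is precisely the paper's Part II, with its inclusion $A_k(\epsilon)\subset B_k(2\epsilon)\cup C_k(\epsilon)\subset A_k(3\epsilon)\cup C_k(\epsilon)$. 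The discrepancy between your constants (involving $1-\tfrac12\|\varphi\|_\nu^2$) and the paper's (involving $\tfrac{1}{\sqrt2}(1-\|\varphi\|_\nu^2)^{-h/\nu}$) is only a $\sqrt2$-normalization issue in the definition of $\etal_r$ and is harmless for the statement as written.

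The genuine gap is in your passage from the sequence $r_n=q^n$ to all $r\rightarrow 0^+$ in the lower bound. You call this ``the same monotonicity/continuity step as in Theorem \ref{th:LIL}'', but it is not. In Theorem \ref{th:LIL} the transfer works because $M(r)$ and $\psil(r)$ are monotone, so $M(r)/\psil(r)\geq M(r_{n+1})/\psil(r_n)$ and one only loses a factor tending to $1$ as $q\rightarrow 1$. Here the functional deviation $\sup_t|\etal_r(t)-\varphi(t)|$ is not monotone in $r$, and the threshold it must stay above tends to $0$. For $r\in[r_{n+1},r_n]$ one has $\etal_r(t)=c_r\,\etal_{r_n}(s_rt)$ with $s_r=r/r_n\in[q,1]$ and $c_r=(r_n/r)^{\nu h}\sqrt{\log\log r_n^{-1}/\log\log r^{-1}}$; with geometric spacing, $s_r$ and $c_r$ stay boundedly away from $1$, so relating the deviation at $r$ to the deviation at $r_n$ (or $r_{n+1}$) produces error terms of the form $(1-s_r)^{h}\|\varphi\|_\nu$ and $(1-c_r^{-1})\|\varphi\|_\infty$, which are of \emph{constant} order and get multiplied by the diverging prefactor $(\log\log r^{-1})^{h/\nu+1/2}$. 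No uniform modulus of continuity of $\BB$ can repair this, because these errors come from the deterministic rescaling of $\varphi$, not from the oscillation of the process; they vanish only if $r_{n+1}/r_n\rightarrow 1$ sufficiently fast. This is exactly why the paper (following \cite{deAcosta83,Monrad}) takes the much denser sequence $r_k=\exp\{-k\,y(k)\}$ with $y(k)=\log\log k/(\log k)^{h^{-1}+1}$ and proves the dedicated interpolation Lemma \ref{lem:techFLIL}, whose three error terms tend to $0$ precisely because of this density, while Borel--Cantelli summability still holds along it. Your proof needs this lemma (or a substitute), and the step as you state it fails along $r_n=q^n$ for any $\varphi\neq 0$.
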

\noindent As usual, taking $\varphi=0$ yields the standard law of the iterated logarithm.

\vspace{0.2cm}

\paragraph{Organization of the paper.} In section \ref{sec:preliminary}, we prove some preliminary results. The main new tools and ideas essentially lie in this section. Some facts about Wiener spaces and infinite-dimensional stable measures are recalled, and then we prove the spectral representation of $\BB$ (Theorem \ref{prop:spectralRep}) and the small deviations estimate of Theorem \ref{th:smallBalls}. We prove Theorem \ref{th:LIL} in Section \ref{sec:ChungLIL} and Theorem \ref{th:functLIL} in Section \ref{sec:FLIL}. Finally, we make some conclusive remarks on the Hausdorff dimension of the graph of $\BB$ and its local Hölder regularity at the origin.

\section{Preliminaries}\label{sec:preliminary}

We recall a few notions about Gaussian measures on Banach spaces and abstract Wiener spaces (see also \cite[Lemma 2.1]{Kuelbs76}). Let $E$ be a separable Banach space and $\mu$ a Gaussian measure on $E$. Let $H$ be the completion of $E^*$ by the action of the covariance operator $S$ of $\mu$, defined by:
\begin{equation}\label{eq:defS}
S\xi = \int_E x\ \langle \xi,x\rangle\ \mu(\text{d}x) \ , \xi\in E^*\ ,
\end{equation}
which maps $E^*$ into a subspace of $E$, and the completion is with respect to the scalar product:
\begin{equation*}
\left(S\xi, S\xi'\right)_H = \int_E \langle\xi,x\rangle\ \langle\xi',x\rangle\ \mu(\text{d} x) \ .
\end{equation*}
This permits to define a sequence $\{\xi_n, n\in \N\}$ in $E^*$, such that $\{S\xi_n, n\in \N\}$ is a complete orthonormal system (CONS) in $H$. We recall that the Paley-Wiener map $\mathcal{I}$ is defined as the isometric extension of the map ${\xi\in E^* \mapsto \langle \xi,\cdot \rangle}$ to a map from $H$ to $L^2(\mu)$. Conversely, it is also possible to start from a separable Hilbert space and to construct an embedding into a larger Banach space, on which there exists a Gaussian measure whose covariance will be related to the inner product on $H$. This is the abstract Wiener space (AWS) approach \cite{Gross,Stroock}. 

\begin{definition}[Reproducing Kernel Hilbert Space]\label{def:RKHS}
Let $(T,m)$ be a separable and complete metric space and $R$ a continuous covariance function on $T\times T$. $R$ determines a unique Hilbert space $H(R)$ satisfying the following properties: \emph{i)} $H(R)$ is a space of functions mapping $T$ to $\R$; \emph{ii)} for all $t\in T$, $R(\cdot,t) \in H(R)$; \emph{iii)} for all $t\in T$, $\forall f\in H(R)$, $\left(f,R(\cdot,t)\right)_{H(R)} = f(t)$ .
\end{definition}

In \cite{Richard}, starting from the reproducing kernel Hilbert space (RKHS) of the $L^2([0,1]^\nu,\lambda)$-indexed fBm of parameter $h\in (0,1/2]$, built upon the kernel $k(f,g) = 1/2 \left(\lambda(f^2)^{2h} + \lambda(g^2)^{2h} - \lambda((f-g)^2)^{2h}\right)$ for $f,g\in L^2([0,1]^\nu,\lambda)$ and denoted by $H(k)$, an integral representation for the multiparameter fBm was obtained. Indeed, first define:
\begin{equation}\label{eq:L2fBm}
\mathcal{W}(\varphi) = \int_E \langle\mathcal{I}(\varphi), x\rangle \ \text{d}\mathbb{W}_x\ ,
\end{equation}
for $\varphi \in H(k)$, where $\mathbb{W}$ is a white noise on some Gaussian measure space $(E,\mu_h)$ with RKHS $H(k)$. Then, $\mathbf{W}_t = \mathcal{W}(k(\mathbf{1}_{[0,t]},\cdot))$ is a multiparameter fBm.
Note the link with $k^{(\nu)}$ defined in (\ref{eq:cov}), as for any $s,t\in[0,1]^\nu$, $k(\mathbf{1}_{[0,s]},\mathbf{1}_{[0,t]}) = k^{(\nu)}(s,t)$.

\vspace{0.1cm}

In general, the embedding between $H$ and $E$ is continuous. We will need it to be Hilbert-Schmidt for an extension of Bochner's theorem to be valid. The following lemma states that starting from a separable Hilbert space $H$, it is possible to find $E$ and $\mu$ satisfying this property and such that $(H,E,\mu)$ is an AWS.

\begin{lemma}\label{lem:HS}
Let $H$ be a separable Hilbert space. There are a separable Hilbert space $(E,\|\cdot\|)$ and a Gaussian measure $\mu$ on $E$ such that $(H,E,\mu)$ is an abstract Wiener space and the embedding $H\subset E$ is Hilbert-Schmidt.
\end{lemma}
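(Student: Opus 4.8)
The plan is to construct the Banach space $E$ explicitly as a weighted $\ell^2$-type completion of $H$, using a fixed complete orthonormal system $\{e_n\}_{n\in\N}$ of the separable Hilbert space $H$ together with a summable sequence of weights. More precisely, I would fix a sequence of positive reals $\{a_n\}$ with $\sum_n a_n^2 < \infty$, and define on $H$ the new inner product $\langle x,y\rangle_E = \sum_n a_n^2\,(x,e_n)_H\,(y,e_n)_H$, so that $\|x\|_E^2 = \sum_n a_n^2 (x,e_n)_H^2$. Let $E$ be the completion of $H$ with respect to $\|\cdot\|_E$; since $a_n^2$ is bounded, $\|\cdot\|_E \le C\|\cdot\|_H$ and the inclusion $H\hookrightarrow E$ is continuous with dense range, and $E$ is a separable Hilbert space by construction.

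The key point is to verify simultaneously that $(H,E,\mu)$ is an abstract Wiener space for a suitable Gaussian measure $\mu$, and that the embedding is Hilbert–Schmidt. For the latter, I would compute the Hilbert–Schmidt norm of the inclusion map $\iota : H \to E$ directly against the orthonormal basis: since $\{e_n\}$ is a CONS in $H$, one has $\|\iota\|_{HS}^2 = \sum_n \|\iota(e_n)\|_E^2 = \sum_n a_n^2 < \infty$ by the choice of weights, so $\iota$ is Hilbert–Schmidt. This is the crucial step, and it is essentially forced by the summability condition, which is why the construction works. For the abstract Wiener space property, I would invoke Gross's theorem: a Hilbert–Schmidt (hence measurable) norm on $H$ gives rise to a unique Gaussian measure $\mu$ on the completion $E$ making $(H,E,\mu)$ an AWS, with $H$ recovered as the Cameron–Martin space. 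Alternatively, one can exhibit $\mu$ concretely as the law of the almost-surely $E$-convergent random series $\sum_n g_n e_n$, where $\{g_n\}$ are i.i.d.\ standard Gaussians; the series converges in $E$ precisely because $\EE\sum_n a_n^2 g_n^2 = \sum_n a_n^2 < \infty$, and a routine computation of the characteristic functional $\xi \mapsto \exp\{-\tfrac12\|S\xi\|_H^2\}$ identifies the covariance operator $S$ of \eqref{eq:defS} and shows that the reproducing structure reproduces the inner product of $H$.

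The main obstacle I anticipate is purely bookkeeping rather than conceptual: one must be careful that the functionals in $E^*$ densely determine $H$ and that the completion does not collapse any directions, i.e.\ that $\|\cdot\|_E$ is genuinely a norm (not merely a seminorm) on $H$. This is guaranteed by taking all $a_n > 0$ strictly, so that $\|x\|_E = 0$ forces every coordinate $(x,e_n)_H$ to vanish and hence $x=0$. A second subtlety is matching the convention for the covariance operator $S$ in \eqref{eq:defS} with the inner product on $E$; since $E$ is itself a Hilbert space here, $E^*$ identifies with $E$ and the action of $S$ is transparent, so no additional duality argument is needed. I would therefore present the proof as: fix the CONS and weights, define $\|\cdot\|_E$, check it is a Hilbert–Schmidt (hence measurable) norm, apply Gross's construction to obtain $\mu$, and conclude that $(H,E,\mu)$ is the desired abstract Wiener space with Hilbert–Schmidt embedding.
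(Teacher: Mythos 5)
Your proof is correct, but it is organized differently from the paper's. The paper argues by \emph{transport}: it assumes the existence of a single reference triple $(H_0,E_0,\mu_0)$ with Hilbert--Schmidt embedding (citing examples in Kuelbs--Li--Talagrand, or the explicit weighted-sequence description given just after the lemma), takes any isometry $u\colon H_0\to H$, extends it to $\tilde u$ on $E_0$, and sets $E=\tilde u(E_0)$, $\mu=\tilde u_*\mu_0$, so that the embedding $\tilde u\circ R\circ u^{-1}$ is again Hilbert--Schmidt. You instead build the example directly on the given $H$: fix a CONS $\{e_n\}$ and weights $a_n>0$ with $\sum_n a_n^2<\infty$, let $E$ be the completion of $H$ under $\|x\|_E^2=\sum_n a_n^2(x,e_n)_H^2$, verify $\|\iota\|_{HS}^2=\sum_n a_n^2<\infty$, and obtain $\mu$ either from Gross's theorem (a Hilbert--Schmidt norm is measurable) or as the law of the a.s.\ convergent series $\sum_n g_n e_n$; a short computation then shows the Cameron--Martin space of $\mu$ is exactly $H$ (the covariance operator acts diagonally, $S\eta=\sum_n a_n^2\eta_n e_n$, its range contains all finitely supported vectors, and the Cameron--Martin norm agrees with $\|\cdot\|_H$ on that range). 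In substance the two proofs rest on the same diagonal construction --- your weighted completion is precisely the kind of triple the paper's transport argument presupposes --- but your version is self-contained and explicit, at the price of invoking Gross's theorem (or the Itô--Nisio/martingale argument for the random series), while the paper's version is shorter and modular: any known example, however it arises, can be carried to an arbitrary separable $H$ by an isometry. Both establish the lemma; if you write yours up, do include the density argument showing $S(E^*)$ is $\|\cdot\|_H$-dense in $H$, since that is the one step of the ``routine computation'' identifying $H$ as the Cameron--Martin space that genuinely needs the strict positivity of all the $a_n$.
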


\begin{proof}
Let us assume that there exists separable Hilbert spaces $H_0$ and $E_0$ such that $H_0$ is densely embedded into $E_0$ by an operator $R$ which is Hilbert-Schmidt, and that there exists a Gaussian measure $\mu_0$ such that $(H_0,E_0,\mu_0)$ is an abstract Wiener space. In that case, $R$ is the covariance operator. Let $u$ be any linear isometry between $H_0$ and $H$, and denote by $(H,E,\mu)$ the AWS given by $E=\tilde{u}(E_0)$ and ${\mu = \tilde{u}_*\mu_0}$, where $\tilde{u}$ is the isometric extension of $u$ (see \cite[p.317]{Stroock}). Since $E_0$ is a Hilbert space, $E$ is also a Hilbert space and the operator ${R' = \tilde{u}\circ R\circ u^{-1}}$ is the natural embedding from $H$ into $E$, and is of Hilbert-Schmidt type.

\noindent The existence of such a $(H_0,E_0,\mu_0)$ triple follows either from examples as in sections 6 and 7 of \cite{KuelbsLiTalagrand}, or by the construction of the next paragraph.
\end{proof}

\noindent Let us detail the Wiener space structure of $(H,E,\mu)$ when $E$ is a Hilbert space. Let $\{x_n, n\in \N\}$ be a complete orthonormal system of $\left(E,(\cdot,\cdot)_E\right)$. For each $n$, let $\lambda_n^2$ be the variance of $(x_n,\cdot)_E \in E^*$ under $\mu$. Note that $\sum_{n\geq 1} \lambda_n^2<\infty$, which follows from the fact that:
\begin{align*}
\sum_{n\in\N} \lambda_n^2 &= \sum_{n\in\N} \int_E (x_n,x)_E^2\ \mu(\text{d}x) \\
&= \int_E \|x\|_E^2 \ \mu(\text{d}x) \ ,
\end{align*}
and this quantity is finite (we know from \citet{Fernique1970} that $\mu$ has exponential moments). Then $H$ is given by:
\begin{equation}\label{repEHilbert}
H = \left\{x\in E: \sum_{n=1}^\infty \left(\frac{(x,x_n)_E}{\lambda_n}\right)^2 <\infty \right\} \ .
\end{equation}
$\{h_n = \lambda_n x_n,\ n\in\N\}$ defines a CONS of $H$ for the scalar product given by $(x,h_n)_H=\lambda_n^{-1} (x,x_n)_E$, for any $x\in H$, and any $n\in \N$. Then, one can check that $H$ is densely and continuously embedded into $E$.

\subsection{Spectral representation of the multiparameter fBm}

The multiparameter fBm does not have independent increments, hence there is no spectral measure in the sense of \citet{Yaglom57}. Such a situation already appeared for the bifractional Brownian motion \cite{tudorxiao07}, but the difficulty was overcome due to the equivalence of the distance induced by the bifractional Brownian motion with the Euclidean distance, even near $0$. This is no longer true here. We shall use instead the $L^2$-increment stationarity in the Wiener space, to produce independent processes. 

\vspace{0.1cm}

Now, we address the spectral decomposition itself. For any $\alpha\in (0,2]$, the application $\xi\in E^* \mapsto \|S \xi\|_H^{\alpha}$ is continuous (because of the inequality ${\|\cdot\|_H \leq C\ \|\cdot\|_{E^*}}$) and negative definite (by an argument on Bernstein functions, see for instance the introduction of \cite{Richard}). Thus, according to Schoenberg's theorem, $\xi\mapsto \exp(-t\|S \xi\|_H^{\alpha})$ is positive definite for any $t\in \R_+^*$. It follows from Lemma \ref{lem:HS} and Sazonov's theorem, according to which a Hilbert-Schmidt map is $\gamma$-radonifying\footnote{see for instance \cite{Yan} for Sazonov's theorem, and \cite{Rockner2011} for its use in a similar context, as well as the references therein.}, that since $\xi\mapsto \exp(-\frac{1}{2}\|S \xi\|_H^{\alpha})$ is continuous on $H$, it is the Fourier transform of a measure $\Gamma_\alpha$ on $E$, i.e:
\begin{equation*}
e^{-\frac{1}{2}\|S \xi\|_H^{\alpha}} = \int_E e^{i\langle \xi, x\rangle}\ \text{d}\Gamma_\alpha(x) \ .
\end{equation*}

\noindent The measure $\Gamma_\alpha$ is a strictly stable and symmetric measure on $E$ of index $\alpha$, since it satisfies (we denote by $\widehat{\Gamma}_\alpha$ the Fourier transform of $\Gamma_\alpha$), for any integer $k$, and any $\xi\in E^*$:
\begin{equation*}
\left(\widehat{\Gamma}_\alpha(\xi)\right)^k = \widehat{\Gamma}_\alpha(k^{1/\alpha}\xi) \quad \textrm{ and }\quad \widehat{\Gamma}_\alpha(-\xi) = \widehat{\Gamma}_\alpha(\xi) \ .
\end{equation*}
In particular, we see that $\Gamma_\alpha$ is infinitely divisible.
\citet{Kuelbs73} extended the spectral decomposition of $\alpha$-stable measures on $\R$ to the Hilbert space setting. Thus, when $\alpha\in (0,2)$,  $\Gamma_\alpha$ has a L\'evy measure $\Delta_\alpha$ and can be written:
\begin{equation*}
\int_E e^{i\langle \xi, x\rangle}\ \text{d}\Gamma_\alpha(x) = \exp\left\{ \int_E \left(e^{i\langle\xi,x \rangle} -1 -i\frac{\langle\xi,x\rangle}{1+\|x\|_E}\right)  \ \Delta_\alpha(\text{d}x)\right\}\ ,
\end{equation*}
with $\Delta_\alpha$ satisfying $\int_E (1\wedge \|x\|_E^2)\ \Delta_\alpha(\text{d}x)<\infty$ and $\Delta_\alpha(\{0\})=0$. That $\alpha$ is strictly smaller than $2$ is essential, and this will be assumed implicitly throughout the rest of this article. It follows, cancelling the imaginary part (by symmetry of $\Delta_\alpha$), that:
\begin{equation}\label{eq:specRep}
\forall \xi \in E^*, \quad\quad -\|S\xi\|_H^{\alpha} = 2 \int_E \left(\cos\langle\xi, x \rangle - 1\right)\ \Delta_\alpha(\text{d}x)
\end{equation}

In the finite-dimensional setup, $\Delta_\alpha$ is known explicitly and appears in the spectral representation of the L\'evy fractional Brownian motion, as in \cite{Talagrand95}. In fact, Lemma 2.1 and 2.2 of \cite{Kuelbs73} give a radial decomposition of $\Delta_\alpha$ in terms of a finite measure $\sigma_\alpha$ defined on the Borel sets of the unit ball $\mathcal{S} = \{x\in E: \|x\|_E=1\}$, such that for any borel set $B$ of $E$:
\begin{equation}\label{eq:radialDec}
\Delta_\alpha(B) = \int_0^\infty \frac{\text{d}r}{r^{1+\alpha}} \int_{\mathcal{S}} \mathbf{1}_B(r y)\ \sigma_\alpha(\text{d} y) \ .
\end{equation}
Besides, $\sigma_\alpha(\text{d} y) = \Delta_\alpha\left(\{x\in E: \|x\|_E\geq 1 \ \text{ and }\ x/\|x\|_E \in \text{d}y\}\right)$.
The previous discussion is summarized in the following proposition.

\begin{proposition}\label{prop:radialDecMpfBm}
Let $(H,E,\mu)$ be any abstract Wiener space such that $E$ is a Hilbert space and the embedding $H\subset E$ is Hilbert-Schmidt. Let $\alpha\in(0,2)$. Then there exists a non-trivial L\'evy measure $\Delta_\alpha$ on $E$ such that Equation (\ref{eq:specRep}) is satisfied, and that can be radially decomposed as in (\ref{eq:radialDec}).
\end{proposition}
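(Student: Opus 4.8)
The plan is to assemble into a single argument the chain of implications developed in the paragraphs preceding the statement, starting from the elementary properties of $\xi\mapsto\|S\xi\|_H^\alpha$ and ending with Kuelbs's structure theorem for stable measures on a Hilbert space. First I would record that for $\alpha\in(0,2]$ the functional $\xi\in E^*\mapsto\|S\xi\|_H^\alpha$ is continuous, which follows from the continuity of the embedding, i.e. the bound $\|\cdot\|_H\le C\,\|\cdot\|_{E^*}$, and that it is negative definite: the squared norm $\xi\mapsto\|S\xi\|_H^2$ is negative definite (being the pullback under the linear map $S$ of the negative definite function $x\mapsto\|x\|_H^2$), and $t\mapsto t^{\alpha/2}$ is a Bernstein function for $\alpha/2\in(0,1]$, so the composition $\|S\cdot\|_H^\alpha$ is again negative definite. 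Schoenberg's theorem then upgrades this to the positive definiteness of $\xi\mapsto\exp(-\tfrac12\|S\xi\|_H^\alpha)$ on all of $H$.

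The crucial step is to realise this continuous, positive-definite functional as the Fourier transform of a genuine (countably additive, Radon) probability measure $\Gamma_\alpha$ on $E$. In infinite dimensions Bochner's theorem fails, so here I would invoke Sazonov's theorem: since $(H,E,\mu)$ is an abstract Wiener space with $E$ a Hilbert space and the embedding $H\subset E$ Hilbert--Schmidt (the hypothesis of the proposition, whose attainability is guaranteed by Lemma~\ref{lem:HS}), the Hilbert--Schmidt operator is $\gamma$-radonifying, so the characteristic functional is continuous in the Sazonov topology and is therefore the Fourier transform of a measure $\Gamma_\alpha$ on $E$. This is exactly the point at which the Hilbert--Schmidt assumption is consumed, and I expect it to be the main obstacle; the remaining steps are either elementary or citations.

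With $\Gamma_\alpha$ in hand I would check directly from the explicit form of $\widehat{\Gamma}_\alpha$ that it is symmetric, $\widehat{\Gamma}_\alpha(-\xi)=\widehat{\Gamma}_\alpha(\xi)$, and strictly stable of index $\alpha$, $\widehat{\Gamma}_\alpha(\xi)^k=\widehat{\Gamma}_\alpha(k^{1/\alpha}\xi)$, hence infinitely divisible. Because $\alpha<2$, the measure $\Gamma_\alpha$ is non-Gaussian, so its Lévy--Khintchine decomposition must carry a non-trivial jump part: I would then quote Kuelbs's extension \cite{Kuelbs73} of the one-dimensional spectral decomposition of stable laws to the Hilbert setting, which produces a Lévy measure $\Delta_\alpha$ satisfying $\int_E(1\wedge\|x\|_E^2)\,\Delta_\alpha(\text{d}x)<\infty$ and $\Delta_\alpha(\{0\})=0$, together with the stated representation of $\widehat{\Gamma}_\alpha$. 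Cancelling the imaginary part by the symmetry of $\Delta_\alpha$ yields (\ref{eq:specRep}). Finally, Lemmas~2.1 and 2.2 of \cite{Kuelbs73} give the radial disintegration (\ref{eq:radialDec}) against a finite spherical measure $\sigma_\alpha$ on the unit sphere $\mathcal{S}$, with the scaling $r^{-1-\alpha}$ forced by the stability index; the non-triviality of $\Delta_\alpha$, equivalently $\sigma_\alpha\neq0$, again reflects $\alpha<2$. Collecting these steps establishes the existence of the required Lévy measure and completes the argument.
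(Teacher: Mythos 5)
Your proposal is correct and follows essentially the same route as the paper: continuity and negative definiteness of $\xi\mapsto\|S\xi\|_H^\alpha$ (via Bernstein functions), Schoenberg's theorem, Sazonov's theorem exploiting the Hilbert--Schmidt embedding to produce the stable measure $\Gamma_\alpha$, then Kuelbs's Lévy--Khintchine decomposition and his Lemmas 2.1--2.2 for the radial form, with the symmetry of $\Delta_\alpha$ cancelling the imaginary part to yield (\ref{eq:specRep}). The only difference is cosmetic: you spell out the Bernstein-function argument and the non-triviality of the jump part (from $\alpha<2$) slightly more explicitly than the paper does.
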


In the sequel, $H$ will be specifically the RKHS of the Brownian sheet in $\R^\nu$, that is, $H=L^2([0,1]^\nu)$ endowed with the Lebesgue measure.
We provide $H$ with $E$ and $\mu$ chosen as in Lemma \ref{lem:HS} to get an AWS. For coherence with the definition of the multiparameter fBm, we now use the notations $\Delta\equiv \Delta_{4h},\ h\in(0,1/2)$, for the measures defined in the previous paragraph. We are now ready to prove Theorem \ref{prop:spectralRep}.

\begin{proof}[Proof of Theorem \ref{prop:spectralRep}]
Let us recall that $\mathscr{B}(\xi) = \int_E \left(1-e^{i\langle \xi,x \rangle}\right)\ \textrm{d}\mathbb{B}_x$, where $\mathbb{B}$ is a complex Gaussian white noise on $E$ with control measure $\Delta$. Thus, the variance of the increments of $\mathscr{B}$ reads ( $\overline{(\cdot)}$ denotes complex conjugation):
\begin{align*}
\VV\left(\mathscr{B}_\xi - \mathscr{B}_{\xi'}\right) &= \EE\left((\mathscr{B}_\xi - \mathscr{B}_{\xi'})\overline{(\mathscr{B}_\xi - \mathscr{B}_{\xi'})}\right)\\
&= \int_E \left(e^{i\langle\xi,x\rangle} - e^{i\langle\xi',x\rangle}\right) \left(e^{-i\langle\xi,x\rangle} - e^{-i\langle\xi',x\rangle}\right)\ \Delta(\text{d}x) \\
&= 2 \int_E (1-\cos\langle\xi-\xi',x\rangle)\ \Delta(\text{d}x) = \|S(\xi-\xi')\|_H^{4h} \ .
\end{align*}
Hence this process has the following covariance:
\begin{equation*}
\EE\left(\mathscr{B}_\xi\ \mathscr{B}_{\xi'}\right) = \frac{1}{2}\left(\|S\xi\|_H^{4h} + \|S\xi'\|_H^{4h} - \|S(\xi-\xi')\|_H^{4h}\right) \ .
\end{equation*}

\noindent Now, by density of $E^*$ in $H$, the Paley-Wiener map permits to extend equation (\ref{eq:specRep}) in the following manner:
\begin{equation*}
\forall \varphi\in H,\ \quad \|\varphi\|_H^{4h} = 2\int_E \left(1-\cos\langle\mathcal{I}(\varphi),x \rangle\right)\ \Delta(\text{d}x)
\end{equation*}
Thus, the quantity denoted by $\int_E \left(1-e^{i\langle \mathcal{I}(\varphi),x \rangle}\right)\ \textrm{d}\mathbb{B}_x$ is well-defined and provides an isometric extension of $\mathcal{B}$ to $H$. So for any $f,g\in H$, 
\begin{equation*}
\EE\left(\mathscr{B}(f)\ \mathscr{B}(g)\right) = \frac{1}{2}\left(\|f\|_H^{4h} + \|g\|_H^{4h} - \|f-g\|_H^{4h}\right) \ ,
\end{equation*}
which coincides with the covariance (\ref{eq:cov}) for this choice of $H\ (=L^2([0,1]^\nu)\ )$ and $f=\mathbf{1}_{[0,s]}$ and $g=\mathbf{1}_{[0,t]}$.
\end{proof}

\begin{remark}\label{rk:RKHS}
$\mathcal{W}$ defined by (\ref{eq:L2fBm}) on $H(k)$ and $\mathcal{B}$ on $H$ are different processes: they are not defined on the same spaces, and the first one is a linear application for fixed $\omega$, which is not true for the second. Nevertheless, it appears that $\{\mathcal{W}(k(f,\cdot)),\ f\in L^2([0,1]^\nu)\}$ and $\{\mathcal{B}(f),\ f\in L^2([0,1]^\nu)\}$ are equal in distribution. This implies that they have the same RKHS, that is used in the functional Chung LIL. In particular, this RKHS is given by:
\begin{equation*}
H^\nu = \overline{\text{Span}\left\{ k^{(\nu)}(t,\cdot), \ t\in [0,1]^\nu \right\}} \ ,
\end{equation*}
where $k^{(\nu)}(t,\cdot) = k(\mathbf{1}_{[0,t]}, \mathbf{1}_{[0,\cdot]})$ and the completion is with respect to the scalar product given by:
\begin{equation*}
\left(k^{(\nu)}(t,\cdot), k^{(\nu)}(s,\cdot)\right)_{\nu} = k^{(\nu)}(t,s) \ .
\end{equation*}
\end{remark}

To conclude this section, we present inequalities on $\Delta$ that will be useful in the proof of the LIL. These are extensions of the truncation inequalities of \citet[p.209]{Loeve}. For any $r\in [0,1]$, let us define the subset $A(r)$ of $H$:
\begin{equation}\label{eq:defAr}
A(r) = \left\{ \varphi_{s,t}\ ;\ s,t\in[0,r]^\nu \right\}\ ,
\end{equation}
where $\varphi_{s,t} = \mathbf{1}_{[0,t]\bigtriangleup[0,s]} = |\mathbf{1}_{[0,t]}-\mathbf{1}_{[0,s]}|$. 

\begin{lemma}\label{lem:decoup}
For any $a>0$ and $\varphi\in A(1)$, we have:
\begin{equation}\label{eq:trunc1}
\int_{\|x\|_E < a} \left(1-\cos\langle \mathcal{I}(\varphi),x\rangle\right)\ \Delta(\textrm{d}x) \leq \|\varphi\|_H^{4h}\ \F(a\|\varphi\|_H) \ ,
\end{equation}
where $\F$ is the function defined in Equation (\ref{eq:defF}), and $\F$ continuously decreases to $0$. Besides, there is a constant $C>0$ such that for any $b>0$ and $\varphi\in H$,
\begin{equation}\label{eq:trunc2}
\int_{\|x\|_E > b} \left(1-\cos\langle\mathcal{I}(\varphi),x\rangle\right)\ \Delta(\textrm{d}x) \leq C\ b^{-4h} \ .
\end{equation}
\end{lemma}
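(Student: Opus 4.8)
The plan is to derive both inequalities from a single self-similarity property of $\Delta$ together with elementary bounds on $1-\cos$. First I would record the scaling of the Lévy measure that is implicit in the radial decomposition (\ref{eq:radialDec}). Writing $T_c\,x=cx$ and substituting $r\mapsto cr$ in (\ref{eq:radialDec}) (here $\alpha=4h$), one finds $(T_c)_*\Delta=c^{4h}\,\Delta$, or equivalently, for every nonnegative Borel $f$ and every $c>0$,
\[
\int_E f(cx)\,\Delta(\mathrm{d}x)=c^{4h}\int_E f(x)\,\Delta(\mathrm{d}x)\ .
\]
As a sanity check, taking $f(x)=1-\cos\langle\mathcal{I}(\varphi),x\rangle$ and using linearity of $\mathcal{I}$ recovers $\tfrac12\|c\varphi\|_H^{4h}=c^{4h}\tfrac12\|\varphi\|_H^{4h}$, consistent with the extension of (\ref{eq:specRep}) to $H$ obtained in the proof of Theorem \ref{prop:spectralRep}. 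This scaling is essentially the only structural input beyond (\ref{eq:specRep}).

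For (\ref{eq:trunc2}) I would bound $1-\cos\langle\mathcal{I}(\varphi),x\rangle\le 2$ and compute the tail of $\Delta$ directly from (\ref{eq:radialDec}): since $\|ry\|_E=r$ for $y\in\mathcal{S}$, writing $\sigma:=\sigma_{4h}$ for the finite directional measure of (\ref{eq:radialDec}),
\[
\Delta\bigl(\{x:\|x\|_E>b\}\bigr)=\sigma(\mathcal{S})\int_b^\infty\frac{\mathrm{d}r}{r^{1+4h}}=\frac{\sigma(\mathcal{S})}{4h}\,b^{-4h}\ ,
\]
the radial integral converging because $4h>0$. Hence the left-hand side of (\ref{eq:trunc2}) is at most $2\,\Delta(\{\|x\|_E>b\})=C\,b^{-4h}$ with $C=\sigma(\mathcal{S})/(2h)$, and the bound is uniform over $\varphi\in H$ since only $1-\cos\le 2$ was used.

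For (\ref{eq:trunc1}) the engine is the scaling relation. Given $\varphi\in A(1)$, set $\lambda=\|\varphi\|_H$ and $\psi=\varphi/\lambda$, a unit vector of $H$; applying the scaling identity with $c=\lambda$ and the linearity $\langle\mathcal{I}(\varphi),x\rangle=\lambda\,\langle\mathcal{I}(\psi),x\rangle$, I would rewrite
\[
\int_{\|x\|_E<a}\bigl(1-\cos\langle\mathcal{I}(\varphi),x\rangle\bigr)\,\Delta(\mathrm{d}x)=\lambda^{4h}\int_{\|x\|_E<a\lambda}\bigl(1-\cos\langle\mathcal{I}(\psi),x\rangle\bigr)\,\Delta(\mathrm{d}x)\ .
\]
The prefactor $\lambda^{4h}=\|\varphi\|_H^{4h}$ and the rescaled radius $a\lambda=a\|\varphi\|_H$ are exactly those of (\ref{eq:trunc1}), so it remains to dominate the rescaled integral by $\F(a\|\varphi\|_H)$ through the definition (\ref{eq:defF}). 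The stated regularity of $\F$ I would obtain as follows: monotonicity in $\x$ is immediate (the integrand is nonnegative and the region grows), and boundedness follows from $\int_E(1-\cos\langle\mathcal{I}(\varphi),x\rangle)\,\Delta(\mathrm{d}x)=\tfrac12\|\varphi\|_H^{4h}\le\tfrac12$ on $A(1)$. For continuity and $\F(\x)\to0$ as $\x\to0^+$, I would note that $\varphi\mapsto\int_{\|x\|_E<\x}(1-\cos\langle\mathcal{I}(\varphi),x\rangle)\,\Delta(\mathrm{d}x)$ is continuous on the \emph{compact} set $A(1)$ (continuity of the Paley–Wiener map together with dominated convergence, using $1-\cos\le\tfrac12\langle\cdot\rangle^2$ and $\int(1\wedge\|x\|_E^2)\,\Delta<\infty$), whence the supremum is attained and, by Dini's theorem, the monotone pointwise convergence to $0$ as $\x\downarrow0$ is uniform, giving $\F(\x)\downarrow0$.

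The two steps I expect to be the real obstacles are, first, the normalisation-matching in (\ref{eq:trunc1}): the scaling produces the \emph{unit} direction $\psi=\varphi/\|\varphi\|_H$, which is not literally one of the indicators $\varphi_{s,t}\in A(1)$, so one must argue that its truncated integral is still controlled by the supremum defining $\F(a\|\varphi\|_H)$ — this is precisely where the exact content of the family over which (\ref{eq:defF}) takes its supremum, and the compactness of $A(1)$, have to be used. Second, all manipulations involve $\langle\mathcal{I}(\varphi),x\rangle$ integrated against $\Delta$ for $\varphi\in H$ rather than $\xi\in E^*$, so they must be justified by the Paley–Wiener extension already invoked at the end of the proof of Theorem \ref{prop:spectralRep} and by the integrability $\int(1\wedge\|x\|_E^2)\,\Delta(\mathrm{d}x)<\infty$; everything else is routine.
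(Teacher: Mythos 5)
Your proposal is correct and takes essentially the same route as the paper: its proof of (\ref{eq:trunc1}) is your scaling argument made concrete through the radial decomposition (\ref{eq:radialDec}) and the change of variable $u=\|\varphi\|_H\, r$, performed after approximating $\varphi$ in $H$ by elements $S\zeta_n'\in S(E^*)$ of the same $H$-norm and passing to the limit in $L^2(\sigma)$ (this approximation is precisely how the paper settles the Paley--Wiener issue you defer), while (\ref{eq:trunc2}) follows from the same bound $1-\cos\le 2$ and the same tail computation of $\Delta(\{\|x\|_E>b\})$. As for your first ``obstacle'': the paper's own proof bounds the rescaled integral by $\F(a\|\varphi\|_H)$ with the integrand written as $1-\cos\langle u\,\mathcal{I}(\varphi/\|\varphi\|_H),y\rangle$, i.e.\ the supremum in (\ref{eq:defF}) is to be understood as taken over the normalized directions $\varphi/\|\varphi\|_H$, $\varphi\in A(1)$ (its continuity argument on the compact $A(1)\times[0,1]$ uses exactly this normalization), so the domination you worried about is immediate once the definition is read this way, and compactness of $A(1)$ then yields the continuous decrease of $\F$ to $0$ just as you anticipate.
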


\begin{proof}
We start with the first inequality, that we prove by approximation of $\varphi$ by elements of $E^*$. Let $\Phi$ denote the norm of $\varphi$.
Let $(\zeta_n')_{n\in\N} = \{(z_n',\cdot)_E,\ n\in \N\}$ be a sequence of $E^*$ such that $S\zeta_n'$ belongs to the $H$-sphere of radius $\Phi$ and converges to $\varphi$ in $H$. For all $n$, let $\zeta_n$ and $z_n$ be the associated normalized (in $E^*$ and $E$) vectors and $(\lambda'_n)_{n\in\N}$ be the family of norms in $H$: $\lambda_n' = \|S\zeta_n\|_H$. Note that if $\varphi\in H\setminus S(E^*)$, $\lambda_n'\rightarrow 0$ as $n\rightarrow \infty$. By construction, $\lambda_n' = \Phi \|\zeta_n'\|_{E^*}^{-1}>0$.
Then, the radial decomposition (\ref{eq:radialDec}) of $\sigma$ yields:
\begin{align*}
\int_{\|x\|_E < a} (1-\cos\langle\zeta_n',x\rangle)\ \Delta(\text{d}x) &= \int_0^a \frac{\text{d}r}{r^{1+4h}} \int_{\mathcal{S}} \left(1-\cos\left\{\frac{r\Phi}{\lambda_n'} (z_n,y)_E\right\}\right)\ \sigma(\text{d}y) \\
&= \Phi^{4h} \int_0^{a\Phi} \frac{\text{d}u}{u^{1+4h}} \int_{\mathcal{S}} \left(1-\cos\left\{\frac{u}{\lambda_n'} (z_n,y)_E\right\}\right)\ \sigma(\text{d}y) \ ,
\end{align*}
where we applied the change of variable $u = \Phi r$. The last integral converges in $L^2(\sigma)$, so this reads:
\begin{align*}
\int_{\|x\|_E<a} (1-\cos\langle\mathcal{I}(\varphi),x\rangle)\ \Delta(\text{d}x) &= \Phi^{4h} \int_0^{a\Phi} \frac{\text{d}u}{u^{1+4h}} \int_{\mathcal{S}} \left(1-\cos \langle u \mathcal{I}(\varphi/\|\varphi\|_H),y\rangle\right)\ \sigma(\text{d}y) \\
&\leq \Phi^{4h} \F(a\Phi) \ ,
\end{align*}
which gives (\ref{eq:trunc1}). Finally, $\F$ decreases continuously to $0$ since the mapping:
\begin{equation*}
(\varphi,\x)\in A(1)\times [0,1] \mapsto \int_{\|x\|_E<\x} (1-\cos\langle\mathcal{I}(\varphi/\|\varphi\|_H),x\rangle)\ \Delta(\text{d}x)
\end{equation*}
is continuous on a compact (being the image of $[0,1]^\nu\times [0,1]^\nu$ by $\varphi_{\cdot,\cdot}$ , $A(1)$ is compact).
 
\vspace{0.2cm}

\noindent To show (\ref{eq:trunc2}) holds, we use a simple inequality on the cosine function:
\begin{align*}
\int_{\|x\|_E > b} \left(1-\cos\langle\xi,x\rangle\right)\ \Delta(\text{d}x) &\leq 2\ \int_{\|x\|_E > b}  \Delta(\text{d}x)  \\
&\leq 2 \int_b^\infty \frac{\text{d}r}{r^{1+4h}} \sigma(\mathcal{S}) \\
&\leq \frac{2\sigma(\mathcal{S})}{2-4h} b^{-4h} \ .
\end{align*}
This concludes the proof of this lemma.

\end{proof}

\subsection{Small deviations of the multiparameter fBm}

Let us state the following observation on the metric induced by the multiparameter fBm:
\begin{lemma}\label{lem:compDist}
For any $a\in (0,1)$, any $b>a$, there exist $m_{a,b}$ depending on $a$ and $b$ and $M_b$ depending on $b$ only, such that for any $s,t\in [a,b]^\nu$,
\begin{equation*}
m_{a,b} \|s-t\| \leq \lambda\left([0,s]\bigtriangleup [0,t]\right) \leq M_b \|s-t\|
\end{equation*}
In particular, the upper bound holds even if $s,t\in[0,b]$. However, for any given $\alpha\in (0,1)$, we have that for all $\epsilon>0$, there exist $s,t \in [0,1]^\nu$ such that $\lambda\left([0,s]\bigtriangleup [0,t]\right) \leq \varepsilon$ but $\|s-t\| \geq \alpha$.
\end{lemma}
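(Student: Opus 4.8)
I would first rewrite the quantity to be estimated as an $L^1$-distance between indicators. Since $[0,s]$ and $[0,t]$ are products of intervals, one has $\lambda([0,s]\bigtriangleup[0,t]) = \|\mathbf{1}_{[0,s]}-\mathbf{1}_{[0,t]}\|_{L^1(\lambda)}$, so $d(s,t):=\lambda([0,s]\bigtriangleup[0,t])$ is a genuine metric (it obeys the triangle inequality). Moreover, for the intersection, $[0,s]\cap[0,t] = \prod_{j=1}^\nu [0,\min(s_j,t_j)]$, which gives the explicit formula $d(s,t) = \prod_j s_j + \prod_j t_j - 2\prod_j \min(s_j,t_j)$. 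The whole argument reduces to comparing $d(s,t)$ coordinatewise with the numbers $|s_k-t_k|$, using on one side only upper bounds on the coordinates and on the other side the lower bound $a>0$.

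For the \emph{upper bound}, I would interpolate between $s$ and $t$ by changing one coordinate at a time: set $w^{(0)}=s$, let $w^{(k)}$ agree with $t$ on the first $k$ coordinates and with $s$ on the remaining ones, so $w^{(\nu)}=t$. By the triangle inequality, $d(s,t)\le \sum_{k=1}^\nu d(w^{(k-1)},w^{(k)})$. Consecutive points differ in a single coordinate, so $[0,w^{(k-1)}]\bigtriangleup[0,w^{(k)}]$ is an honest box whose measure is $|s_k-t_k|$ times a product of the (common) coordinates, each of which is at most $b$ on $[0,b]^\nu$; hence $d(w^{(k-1)},w^{(k)})\le b^{\nu-1}|s_k-t_k|$. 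Summing and using $\|s-t\|_1\le \sqrt\nu\,\|s-t\|$ yields $d(s,t)\le \sqrt\nu\,b^{\nu-1}\|s-t\|$, so $M_b=\sqrt\nu\,b^{\nu-1}$. Since this step only invoked upper bounds on coordinates, it is valid for all $s,t\in[0,b]^\nu$, which is exactly the ``in particular'' assertion.

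For the \emph{lower bound} I would exhibit an explicit box sitting inside the symmetric difference. For each coordinate $k$ set
\[
R_k = \Big\{x\in\R_+^\nu:\ \min(s_k,t_k)\le x_k\le \max(s_k,t_k),\ 0\le x_j\le \min(s_j,t_j)\ \ \forall j\neq k\Big\}.
\]
A direct check (distinguishing whether $s_k<t_k$ or $s_k>t_k$) shows that up to a null set $R_k\subseteq [0,s]\bigtriangleup[0,t]$, and $\lambda(R_k)=|s_k-t_k|\prod_{j\neq k}\min(s_j,t_j)$. On $[a,b]^\nu$ we have $\min(s_j,t_j)\ge a$, hence $d(s,t)\ge \lambda(R_k)\ge a^{\nu-1}|s_k-t_k|$ for every $k$; taking the maximum over $k$ and using $\max_k|s_k-t_k|\ge \|s-t\|/\sqrt\nu$ gives $d(s,t)\ge a^{\nu-1}\nu^{-1/2}\|s-t\|$, so one may take $m_{a,b}=a^{\nu-1}\nu^{-1/2}$. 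This is the step where the positivity of $a$ is essential, since otherwise $\prod_{j\neq k}\min(s_j,t_j)$ degenerates, and I expect it to be the main point of the lemma.

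Finally, for the negative statement I would produce points hugging an axis. Given $\alpha\in(0,1)$ and $\varepsilon>0$, choose $\eta\in(0,1]$ with $\eta^{\nu-1}\le\varepsilon$ and set $s=(0,\eta,\dots,\eta)$ and $t=(\alpha,\eta,\dots,\eta)$, both in $[0,1]^\nu$. They differ only in the first coordinate, so $\|s-t\|=\alpha$, while $[0,s]$ is degenerate and $[0,s]\bigtriangleup[0,t]=[0,\alpha]\times[0,\eta]^{\nu-1}$ up to a null set, of measure $\alpha\,\eta^{\nu-1}\le\eta^{\nu-1}\le\varepsilon$. This exhibits pairs that are $d$-close but Euclidean-far, showing the equivalence must fail as the coordinates approach $0$; this confirms that the constant $m_{a,b}$ cannot survive the limit $a\to0$ and quantifies the singular behaviour near the axes.
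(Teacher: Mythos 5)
Your proof is correct, but it takes a genuinely different route from the paper's. The paper does not prove the two-sided comparison at all: it cites Lemma 3.1 of Herbin--Arras--Barruel for both bounds, adding only the remark that, upon inspection of that external proof, the upper-bound constant depends on $b$ alone and not on $a$; it then concludes with a counterexample for the last assertion. Your argument is instead self-contained: the coordinate-by-coordinate interpolation combined with the triangle inequality for $d(s,t)=\|\mathbf{1}_{[0,s]}-\mathbf{1}_{[0,t]}\|_{L^1}$ gives the upper bound, and the inscribed box $R_k$ gives the lower bound. What this buys is explicit constants, $M_b=\sqrt{\nu}\,b^{\nu-1}$ and $m_{a,b}=\nu^{-1/2}a^{\nu-1}$, and a transparent reason why $M_b$ requires no dependence on $a$ --- precisely the point the paper could only settle by sending the reader to the reference. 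For the final negative statement the two counterexamples differ slightly: you change a single coordinate and place one point on an axis, $s=(0,\eta,\dots,\eta)$, $t=(\alpha,\eta,\dots,\eta)$ with $\eta^{\nu-1}\leq\varepsilon$, whereas the paper keeps both points strictly positive, taking $s_n=(2^{-n},b,\dots,b)$ and $t_n=(b,2^{-n},b,\dots,b)$, so that the points sit near two \emph{different} axes; both choices verify the claim, and both (like the statement itself) implicitly require $\nu\geq 2$. One cosmetic caveat: on $[0,b]^\nu$ the quantity $d$ is only a pseudometric (distinct points on the axes can be at $d$-distance $0$), but the triangle inequality --- the only property you use --- holds regardless, so your argument is unaffected.
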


\begin{proof}
The upper and lower bounds on $\lambda\left([0,s]\bigtriangleup [0,t]\right)$ are stated in Lemma 3.1 of \cite{HerbinArras} (up to equivalence of $l^1$ and $l^\infty$ distances with the Euclidean distance), except that there, the constant in the upper bound is said to depend also on $a$. From the proof of \cite{HerbinArras}, it is clear that this is not necessary.

\noindent To prove the last statement, let $s_n=(2^{-n},b,\dots,b) \in [0,b]^\nu$ and $t_n = (b,2^{-n},b,\dots,b)\in [0,b]^\nu$. It appears that $\lambda\left([0,s_n]\bigtriangleup [0,t_n]\right)\rightarrow 0$ as $n\rightarrow \infty$, while $\|s_n-t_n\|$ increases to $\sqrt{2} b$.
\end{proof}

Concerning notations, we will have to compare several distances, so $d_E$ will denote the Euclidean distance in $\R^\nu$, and for any $h\in (0,1]$, $d_{h}$ is the following distance:
\begin{align*}
\textrm{for } s,t\in [0,1]^\nu, \quad d_{h}(s,t) = \lambda\left([0,s]\bigtriangleup [0,t]\right)^{h} \ .
\end{align*}
When $h=1$, we will prefer the notation $d_\lambda$. Note that we will only consider results for $h\leq 1/2$ because of the definition of $\BB$, but $d_h$ is still a distance for $h\in(1/2,1]$ (but no longer negative definite which prevents the definition of a multiparameter fBm for such values). Accordingly, $B_h(t,r)$ is the ball of $d_h$-radius $r$ centred at $t$. If no subscript is written, this will be the Euclidean ball. The notation $\asymp$ between two functions $f$ and $g$ means that near a point $a$, $f(x) = O(g(x))$ and $g(x)= O(f(x))$. We recall that on a (pre-)compact metric space $(T,d)$, the metric entropy $N(T,d,\varepsilon)$ gives, for any $\varepsilon>0$, the minimal number of balls of radius $\varepsilon$ that are necessary to cover $T$.

\begin{lemma}\label{lem:entropy}
Let $\nu\in \N$, then the $d_\lambda$-metric entropy of $[0,1]^\nu$ is, for $\varepsilon$ small enough:
\begin{equation*}
N([0,1]^\nu,d_\lambda,\varepsilon) \asymp \varepsilon^{-\nu} \ .
\end{equation*}
\end{lemma}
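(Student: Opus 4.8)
The statement is a two-sided estimate, so the plan is to bound $N([0,1]^\nu,d_\lambda,\varepsilon)$ from above and from below separately, using in both cases the comparison between $d_\lambda$ and the Euclidean distance $d_E$ supplied by Lemma~\ref{lem:compDist}. The crucial structural feature to respect is that this comparison is \emph{two-sided only away from the axes}: the upper estimate $d_\lambda(s,t)\le M_1\,d_E(s,t)$ is valid on all of $[0,1]^\nu$, whereas the matching lower estimate $d_\lambda(s,t)\ge m_{a,1}\,d_E(s,t)$ holds only on $[a,1]^\nu$. Accordingly the two directions are handled by different routes.

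For the upper bound I would invoke the global inequality $d_\lambda(s,t)\le M_1\,d_E(s,t)$. It shows that every Euclidean ball $B(t,\varepsilon/M_1)$ is contained in the $d_\lambda$-ball of radius $\varepsilon$ centred at $t$, so any optimal Euclidean cover of $[0,1]^\nu$ by balls of radius $\varepsilon/M_1$ is \emph{a fortiori} a $d_\lambda$-cover by balls of radius $\varepsilon$. Since the Euclidean metric entropy of the unit cube satisfies $N([0,1]^\nu,d_E,\delta)\asymp \delta^{-\nu}$, this gives $N([0,1]^\nu,d_\lambda,\varepsilon)\le N([0,1]^\nu,d_E,\varepsilon/M_1)\asymp \varepsilon^{-\nu}$.

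For the lower bound the degeneracy of $d_\lambda$ near the axes (the last assertion of Lemma~\ref{lem:compDist}) rules out working on the whole cube, so I would localise on $[1/2,1]^\nu$, where $d_\lambda(s,t)\ge m_{1/2,1}\,d_E(s,t)$. I choose a set $P\subset[1/2,1]^\nu$ whose points are pairwise $d_\lambda$-separated by more than $2\varepsilon$; since Euclidean separation by $2\varepsilon/m_{1/2,1}$ forces $d_\lambda$-separation by $2\varepsilon$, a standard packing (volume) argument on the fixed cube $[1/2,1]^\nu$ produces such a $P$ with $\# P$ of order $\varepsilon^{-\nu}$. Now any $d_\lambda$-cover of $[0,1]^\nu$ by balls of radius $\varepsilon$ must cover $P$, and by the triangle inequality each such ball contains at most one point of $P$ (the centres need not lie in $[1/2,1]^\nu$, which is irrelevant). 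Hence $N([0,1]^\nu,d_\lambda,\varepsilon)\ge \# P$, which is bounded below by a constant multiple of $\varepsilon^{-\nu}$, and the two bounds together give the claimed $\asymp \varepsilon^{-\nu}$.

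The only genuine difficulty is the one just exploited: because $d_\lambda$ collapses anisotropically as one approaches the coordinate hyperplanes, one cannot naively combine monotonicity of the covering number under $[1/2,1]^\nu\subset[0,1]^\nu$ with the two-sided comparison on the whole cube. Confining the separated set to a region bounded away from the axes, and using that the covering number of $[0,1]^\nu$ dominates the packing number of \emph{any} subset, resolves this cleanly; the remainder is the routine Euclidean entropy computation, with all implicit constants depending only on $\nu$ (and on the fixed choice $a=1/2$).
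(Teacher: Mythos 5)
Your proposal is correct and follows essentially the same route as the paper: the upper bound via the global inequality $d_\lambda\le M_1\,d_E$ of Lemma~\ref{lem:compDist}, and the lower bound by localising to $[1/2,1]^\nu$ where the reverse comparison $d_\lambda\ge m_{1/2,1}\,d_E$ holds. The only difference is technical rather than structural: the paper writes $N([0,1]^\nu,d_\lambda,\varepsilon)\ge N([1/2,1]^\nu,d_\lambda,\varepsilon)\ge N([1/2,1]^\nu,d_E,m_{1/2,1}^{-1}\varepsilon)$ directly via covering-number monotonicity, whereas you route the same localisation through a $2\varepsilon$-separated (packing) set, which is an equivalent and equally standard way to make that monotonicity step airtight.
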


\begin{proof}
Let us remark that due to Lemma \ref{lem:compDist}, $d_\lambda(s,t)\leq M_1 d_E(s,t)$, for any $s,t\in [0,1]^\nu$. Thus, for any ball one has $B_\lambda(t_0,r) \supseteq B(t_0, M_1^{-1} r)$. We can assert that:
\begin{equation*}
N\left([0,1]^\nu,d_\lambda,\varepsilon\right) \leq N\left([0,1]^\nu,d_E,M_1^{-1}\varepsilon\right) \asymp \left(M_1^{-1} \varepsilon\right)^{-\nu} \ ,
\end{equation*}
as $\varepsilon\rightarrow 0$. Conversely,
\begin{align*}
N\left([0,1]^\nu,d_\lambda,\varepsilon\right) &\geq N\left([1/2,1]^\nu, d_\lambda,\varepsilon\right) \\
&\geq N\left([1/2,1]^\nu, d_E, m_{1/2,1}^{-1} \varepsilon\right) \\
&\asymp \left(2 m_{1/2,1}^{-1} \varepsilon\right)^{-\nu} \ ,
\end{align*}
so both inequalities give the expected result.
\end{proof}

We now explain how this lemma yields our result on small deviations of $\BB$.

\begin{proof}[Proof of Theorem \ref{th:smallBalls}]
First, notice the isometry between the metric space $([0,1]^\nu,d_h)$ and the subset of $H(k)$ defined by $\{k(\mathbf{1}_{[0,t]},\cdot),\ t\in[0,1]^\nu\}$ with the metric induced by the fBm indexed on $L^2([0,1]^\nu,\lambda)$ (see Eq. (\ref{eq:L2fBm})). Hence, we can apply Theorem 4.6 of \cite{Richard}, which states that (for $h<1/2$), there are positive constants $k_1\leq k_2$ such that for any  $\varepsilon>0$,
\begin{align*}
k_1\ N([0,1]^\nu,d_h,\varepsilon) \leq -\log \PP\left(\sup_{t\in[0,1]^\nu}|\BB_t|\leq \varepsilon\right) \leq k_2\ N([0,1]^\nu,d_h,\varepsilon)\ .
\end{align*}
For any $\varepsilon>0$, any $t\in [0,1]^\nu$, the ball $B_h(t,\varepsilon)$ is the same as $B_\lambda(t,\varepsilon^{1/h})$. A direct consequence is that $N([0,1]^\nu,d_h,\varepsilon) = N([0,1]^\nu,d_\lambda,\varepsilon^{1/h})$. Hence it suffices to calculate the $d_\lambda$-entropy to obtain the result for any $h$. Besides, $\BB$ satisfies the subsequent self-similarity property: for any $r>0$,
\begin{equation*}
\left\{\BB_t, t\in [0,1]^\nu\right\} \overset{(d)}{=} \left\{ r^{-\nu h} \BB_{r t}, t\in [0,1]^\nu\right\} \ .
\end{equation*}
Therefore, $\PP\left(\sup_{t\in[0,r]^\nu}|\BB_t|\leq \varepsilon\right) = \PP\left(\sup_{t\in[0,1]^\nu}|\BB_t|\leq r^{-\nu h} \varepsilon\right)$ and so:
\begin{equation*}
k_1\ N\left([0,1]^\nu,d_\lambda, r^{-\nu}\varepsilon^{1/h}\right)\leq -\log \PP\left(\sup_{t\in[0,r]^\nu}|\BB_t|\leq \varepsilon\right) \leq k_2\ N\left([0,1]^\nu,d_\lambda, r^{-\nu}\varepsilon^{1/h}\right) \ .
\end{equation*}
Lemma \ref{lem:entropy} permits to conclude, with $\kappa_1\leq \kappa_2$ derived from $k_1$, $k_2$ and the approximation on the metric entropy.
\end{proof}

\begin{remark}\label{rem:smallBalls}
\noindent This is different from the L\'evy fBm $X$, for which the above \emph{log}-probability is of the order $r^\nu\varepsilon^{-\nu/h}$ (see \cite{Talagrand95}). In fact, the small deviations of the multiparameter fBm away from the axes are also different of those at $0$, and similar to the Lévy fBm. Indeed, if $t_0$ is not on the axes and $r$ is such that $B(t_0,r)\subset (0,\infty)^\nu$, the equivalence between distances $d_\lambda$ and $d_E$ yields, as $\varepsilon \rightarrow 0$:
\begin{align*}
-\log\PP\left(\sup_{t\in B(t_0,r)} |\BB_t|\leq \varepsilon\right) \asymp N(B(t_0,r), d_E,\varepsilon^{1/h}) \asymp \left(\frac{r}{\varepsilon^{1/h}}\right)^\nu \ .
\end{align*}
\end{remark}

\section{A Chung-type law of the iterated logarithm}\label{sec:ChungLIL}

In this section, we prove Theorem \ref{th:LIL}. The abstract Wiener space is the same as in Theorem \ref{prop:spectralRep} and below.

\begin{remark}
Observe that $h$ is still fixed in $(0,1/2)$. The case $h=1/2$ is special since it corresponds to the Brownian sheet. Its behaviour differs a lot from the $h$-multiparameter fBm with $h<1/2$. This difference is due to the loss of the property of local nondeterminism, which the multiparameter fBm possesses when $h<1/2$ only. For more information on small deviations and Chung-type law of the iterated logarithm of the Brownian sheet, we refer to \cite{Talagrand94}.
\end{remark}

\begin{remark}\label{rem:diffLIL}
If $t_0$ is not on the axes, the Chung-type law of the iterated logarithm given in \cite{HerbinXiao} for the multiparameter fBm reads:
\begin{equation*}
\liminf_{r\rightarrow 0^+}  \frac{\sup_{\|t\|\leq r} |\BB_{t_0+t} - \BB_{t_0}|}{r^h \left(\log\log(r^{-1})\right)^{-h/\nu}} = c\  \quad \text{a.s.} \ ,
\end{equation*}
\noindent for some deterministic $c$ that \emph{does} depend on $t_0$. We will prove below Theorem \ref{th:LIL}, which surprisingly states that near $0$, the local modulus is in fact of order $r^{\nu h}\ \widetilde{\Psi}(r)$ (where $\widetilde{\Psi}$ is a logarithmic correction term), which differs significantly from $r^h$ as soon as $\nu\geq 2$.
\end{remark}

\begin{proof}[Proof of Theorem \ref{th:LIL}]
The proof will be carried out in three steps. In the first, we obtain the lower bound for some constant $\beta_1$. In the second and third steps, we follow the scheme proposed in \cite{Monrad} and which  appears to be standard in subsequent works, but with the addition of methods related to the infinite dimensional setting described above.

\textbf{1)} Let $\gamma>1$, $r_k = \gamma^{-k}$ and $\beta_1 = \left(\kappa_1/(1+\epsilon)\right)^{h/\nu}$, where $\kappa_1$ is the constant in the upper bound of the small deviation probability of $\BB$. The upper bound in the small deviations (\ref{eq:smallBalls}) implies:
\begin{align*}
\sum^{\infty} \PP\left(M(r_k) \leq \beta_1 \psil(r_k)\right) &\leq \sum^{\infty} \exp\left\{- \kappa_1 \beta_1^{-\nu/h} \log \log(r_k^{-1})\right\}\\
&\leq \sum^{\infty} (\log \gamma^k)^{-(1+\epsilon)} <\infty \ ,
\end{align*}
where the sums start at $k$ large enough (i.e. so that $\beta_1 (\log \log \gamma^k)^{-h}$ is small enough, as in Theorem \ref{th:smallBalls}). Then, the Borel-Cantelli lemma gives:
\begin{equation*}
\liminf_{k\rightarrow \infty} M(r_k)/\psil(r_k) \geq \beta_1 \quad \text{a.s.}
\end{equation*}
So for $r_{k+1}<r\leq r_k$:
\begin{equation*}
M(r)/\psil(r) \geq M(r_{k+1})/\psil(r_{k}) \geq \beta_1 \frac{\psil(r_{k+1})}{\psil(r_{k})} \geq \left(\kappa_1/(1+\epsilon)\right)^{h/\nu} \gamma^{-\nu h} \frac{\psilt(r_{k+1})}{\psilt(r_{k})} \ .
\end{equation*}
This is true for any $\epsilon>0, \gamma>1$, hence we get the following lower bound:
\begin{equation}\label{eq:lowBound}
\PP\left(\liminf_{r\rightarrow 0} \frac{M(r)}{\psil(r)} \geq \kappa_1^{h/\nu} \right) = 1\ .
\end{equation}

\textbf{2)} Now, recall that $\kappa_2$ is the constant in the lower bound of the small balls, and define $\beta_2 = \kappa_2^{h/\nu}$. 
For some small (fixed) $\eta>0$, we define the sequence $(\epsilon_k)_{k\in\N^*}$ by:
\begin{equation}\label{eq:defEpsk}
\epsilon_k = \F^{-1}\left((\log k)^{-2h/\nu - 2\eta}\right) \ .
\end{equation}
By Lemma \ref{lem:decoup}, $\F$ is a continuous increasing function on any interval $[0,T]$ such that $\F(0) = 0$. Thus, $\epsilon_k$ is a well-defined sequence which converges to $0$ and satisfies:
\begin{equation*}
\frac{(\log k)^{h/\nu}}{\sqrt{-\F(\epsilon_k) \log \F(\epsilon_k)}} \rightarrow \infty \quad \text{ as } k\rightarrow \infty \ .
\end{equation*}
Let's define another sequence $(r_k)_{k\in\N^*}$ by the following induction:
\begin{equation}\label{eq:defRk}
r_1=1 \quad \text{ and }\quad \forall k\geq 2,\  r_{k+1} = r_k\ \F(\epsilon_k)^{1/(2\nu h)}\ \epsilon_{k+1}^{2/\nu} \ .
\end{equation}
One can now choose $\psiut$ to be any increasing continuous function on $[0,1]$, satisfying the following set of conditions: for any $k\in\N^*$,
\begin{equation}\label{eq:defPsiut}
\psiut(r_k) = \left(\log k\right)^{-h/\nu} \ .
\end{equation}
We recall that for a given $\psiut$, chosen as above, $\psiu$ is defined by $\psiu(r) = r^{\nu h}\ \psiut(r)\ ,\ {r\in[0,1]}$.

For these parameters, the lower bound in the small deviations of $\BB$ implies:
\begin{align}\label{eq:divergence}
\sum^\infty \PP\left(\sup_{t\in[0,r_k]^\nu} |\BB_t|/\psiu(r_k) \leq \beta_2\right) &\geq \sum^\infty \exp\left\{-\kappa_2 (\beta_2 \psiut(r_k))^{-\nu/h} \right\} \nonumber\\
&\geq \sum^\infty \frac{1}{k} =\infty \ ,
\end{align}
where the sums start at $k$ large enough (i.e. so the hypothesis of Theorem \ref{th:smallBalls} is satisfied). This is not enough to prove the expected result, because these events are not independent. We will fix this using an idea that appeared in \cite{Talagrand95} and \cite{Monrad}, to create independence by means of increment stationarity. Since this last property is not satisfied by the multiparameter fractional Brownian motion, we shall rely instead on the spectral representation obtained in the previous section.

\noindent We recall that $\varphi_t = \mathbf{1}_{[0,t]},\ t\in[0,1]^\nu$ is an element of $H$. For a family of disjoint intervals $\{I_k = (a_k,a_{k+1}],\ k\in \N\}$, where $(a_k)_{k\in \N}$ is an increasing sequence of $\R_+$ such that $a_k\rightarrow \infty$ ($a_k$ will be specified later), we define the following processes:
\begin{align}
& \BB^{k}_{t} = \int_{\|x\|_E \in I_k} \left(1-e^{i\langle\mathcal{I}(\varphi_t), x\rangle}\right)\ \text{d}\mathbb{B}_x \ ,\ t\in [0,1]^\nu \label{eq:defBhk}\\
& \tilde{\BB}^{k}_{t} = \BB_{t} - \BB^{k}_t = \int_{\|x\|_E \notin I_k} \left(1-e^{i\langle\mathcal{I}(\varphi_t), x\rangle}\right)\ \text{d}\mathbb{B}_x \ ,\ t\in[0,1]^\nu\ .\label{eq:defBhktilde}
\end{align}

\noindent Let $\Sigma$ denote the covariance operator of $\BB$ and $\Sigma_k$ denote the covariance operator of $\BB^{k}$. It is clear that $\Sigma-\Sigma_k$ is a positive semi-definite operator. Hence, Anderson's correlation inequality \cite{Anderson} applies and we get, for all $k\in \N$:
\begin{equation*}
\PP\left(\sup_{t\in[0,r_k]^\nu} |\BB^{k}_{t}|/\psiu(r_k) \leq \beta_2\right) \geq \PP\left(\sup_{t\in[0,r_k]^\nu} |\BB_t|/\psiu(r_k) \leq \beta_2 \right)\ .
\end{equation*}

\noindent As a consequence of Equation (\ref{eq:divergence}), we see that:
\begin{equation*}
\sum_{k\geq 1} \PP\left(\sup_{t\in[0,r_k]^\nu} |\BB^{k}_{t}|/\psiu(r_k) \leq \beta_2\right) = \infty\ .
\end{equation*}
Since the events $\left\{\sup_{t\in[0,r_k]^\nu} |\BB^{k}_{t}|/\psiu(r_k)\leq \beta_2\right\}, \ k\in\N,$ are independent, the reciprocal of Borel-Cantelli lemma yields that almost surely,
\begin{equation}\label{eq:borneSupPartielle}
\liminf_{k\rightarrow \infty} \sup_{t\in[0,r_k]^\nu} |\BB^{k}_t|/\psiu(r_k) \leq \beta_2 \ .
\end{equation}

\vspace{0.1cm}

\textbf{3)} For any $k\in\N^*$, let $a_{k} = r_k^{-\nu/2} \epsilon_k$. Note that (\ref{eq:defRk}) implies that:
\begin{equation}\label{eq:divergenceAk}
a_{k+1} r_k^{\nu/2} = r_{k+1}\ \F(\epsilon_k)^{-1/4h}\ \epsilon_{k+1}^{-1} \geq \F(\epsilon_k)^{-1/4h} \ .
\end{equation}
In particular, $a_{k+1} r_k^{\nu/2}$ goes to infinity. Now, Lemma \ref{lem:decoup} acts on the incremental variance of $\tilde{\BB}^{k}$ as follows: for any $s,t\in [0,r_k]^\nu$, letting $\varphi_{s,t} = |\varphi_s - \varphi_t|$,
\begin{align}\label{eq:partialInc}
\VV\left(\tilde{\BB}^{k}_s - \tilde{\BB}^{k}_t\right) &= \int_{\|x\|_E<a_k} \left(1-\cos\langle \mathcal{I}(\varphi_{s,t}),x\rangle\right)\ \Delta(\text{d}x) + \int_{\|x\|_E\geq a_{k+1}} \left(1-\cos\langle \mathcal{I}(\varphi_{s,t}),x\rangle\right)\ \Delta(\text{d}x) \nonumber\\
&\leq C\left(\|\varphi_{s,t}\|_H^{4h}\ \F\left(a_k\ \|\varphi_{s,t}\|_H\right) + a_{k+1}^{-4h}\right) \\
&\leq C\left(r_k^{2\nu h}\ \F(a_k r_k^{\nu/2}) + a_{k+1}^{-4h}\right) \nonumber\\
&\leq C r_k^{2\nu h} \left( \F(\epsilon_k) + (a_{k+1}\ r_k^{\nu/2} )^{-4h}\right) \ ,\nonumber
\end{align}
for some positive constant $C$, where $\|\varphi_{s,t}\|_H^2 = \lambda([0,s]\bigtriangleup [0,t]) \leq \lambda([0,r_k]^\nu) = r_k^\nu$. Thus, for this choice of $r_k$ and $a_k$, letting $D_k^2$ denote this incremental variance, the previous equation and (\ref{eq:divergenceAk}) give:
\begin{align*}
D_{k}^2 &= \sup_{s,t\in [0,r_k]^\nu} \VV\left(\tilde{\BB}^{k}_s - \tilde{\BB}^{k}_t\right) \\
&\leq 2 C\ r_k^{2\nu h}\ \F(\epsilon_k) \ ,
\end{align*}
which decreases faster than $\sup_{s,t\in [0,r_k]^\nu} \VV\left(\BB_s - \BB_t\right)$ (as $k\rightarrow \infty$). By a Gaussian concentration result, we will see that $D_k$ will permit us to obtain an upper bound for the large deviations of $\tilde{\BB}^{k}$. Let $\tilde{d}_{h,k}$ be the distance induced by this process. We have just seen that $\tilde{d}_{h,k} \leq d_h$. Thus, the metric entropy of a set computed with $\tilde{d}_{h,k}$ is smaller than the one computed with $d_h$.
\begin{align*}
\int_0^{D_{k}} \sqrt{\log N([0,r_k]^\nu,\tilde{d}_{h,k},\varepsilon)}\ \text{d}\varepsilon &\leq \int_0^{D_{k}} \sqrt{\log N([0,r_k]^\nu,d_h,\varepsilon)}\ \text{d}\varepsilon \\
&\leq \int_0^{D_{k}} \sqrt{\log \left(\kappa \frac{r_k^{\nu^2}}{\varepsilon^{\nu/h}}\right)}\ \text{d}\varepsilon \ ,
\end{align*}
where the upper bound on $N([0,r_k]^\nu,d_{h},\varepsilon)$ is due to the link with the small balls of $\BB$ as in Theorem \ref{th:smallBalls}, with some $\kappa>0$ that comes from the asymptotics of $N([0,1]^\nu,d_\lambda,\varepsilon)$ in Lemma \ref{lem:entropy}. 
\begin{align*}
\int_0^{D_{k}} \sqrt{\log N([0,r_k]^\nu,\tilde{d}_{h,k},\varepsilon)}\ \text{d}\varepsilon &\leq \sqrt{\frac{\nu}{h}} \kappa^{h/\nu} r_k^{\nu h} \int_0^{\sqrt{2C} \kappa^{-h/\nu} \sqrt{\F(\epsilon_k)}} \sqrt{\log x^{-1}}\ \text{d}x \\
&\leq C_1 r_k^{\nu h} \sqrt{-\F(\epsilon_k)\ \log(C_2 \F(\epsilon_k)) } \ ,
\end{align*}
where we made the change of variable $x= \varepsilon\ \kappa^{-h/\nu} r_k^{-\nu h}$, and $C_1$ and $C_2$ are given by:
\begin{equation*}
C_1 = \sqrt{\frac{C\nu}{h}}\ , \quad \quad C_2 = 2 C \kappa^{-2h/\nu} \ .
\end{equation*}

\vspace{0.2cm}

\noindent By Talagrand's lemma \cite{Talagrand95}, if $u> u_0(k) := C_1 r_k^{\nu h} \sqrt{-\F(\epsilon_k)\ \log(C_2 \F(\epsilon_k)) }$, 
\begin{align*}
\PP\left(\sup_{t\in[0,r_k]^\nu}|\tilde{\BB}^{k}_t|\geq u\right) \leq \exp\left(-\frac{(u-u_0(k))^2}{D_{k}^2}\right) \ .
\end{align*}
Let $\epsilon>0$. In order to replace $u$ by $\epsilon \beta_2 \psiu(r_k)$, one notices that:
\begin{equation*}
\frac{\psiu(r_k)}{u_0(k)} = \frac{\psiut(r_k)}{C_1 \sqrt{-\F(\epsilon_k)\ \log(C_2 \F(\epsilon_k)) }} \ , 
\end{equation*}
and this quantity goes to infinity, by definition of $\epsilon_k$ in (\ref{eq:defEpsk}) and $\psiut$ in (\ref{eq:defPsiut}). Thus, replacing $u$ with $\epsilon\beta_2 \psiu(r_k)$ for $k$ big enough, reads:
\begin{align*}
\PP\left(\sup_{t\in[0,r_k]^\nu}|\tilde{\BB}^{k}_t|\geq \epsilon \beta_2 \psiu(r_k)\right) &\leq \exp\left(-\frac{(\epsilon \beta_2 \psiu(r_k)-u_0(k))^2}{D_{k}^2}\right) \\
&\leq \exp\left( - \frac{u_0(k)^2}{D_k^2} \ \left(\epsilon \beta_2\psiu(r_k)\ u_0(k)^{-1}-1\right)^2\right) \\
&\leq \exp\left( - \frac{C_1\sqrt{-\log(C_2 \F(\epsilon_k))}}{2 C} \ \left(\epsilon \beta_2\psiu(r_k)\ u_0(k)^{-1}-1\right)^2\right) \ ,
\end{align*}
whose sum is finite, since $\psiu(r_k) u_0(k)^{-1}$ diverges and $\F(\epsilon_k)$ goes to $0$. Hence, applying once again the Borel-Cantelli lemma, we have almost surely,
\begin{equation*}
\liminf_{k\rightarrow \infty} \sup_{t\in [0,r_k]^\nu} |\tilde{\BB}^{k}_t|/\psiu(r_k) \leq \epsilon \beta_2 \ .
\end{equation*}
Therefore, combining this with (\ref{eq:borneSupPartielle}), we see that almost surely:
\begin{equation*}
\liminf_{k\rightarrow \infty} \sup_{t\in [0,r_k]^\nu} |\BB_t|/\psiu(r_k) \leq (1+\epsilon)\beta_2 \ .
\end{equation*}

\noindent Since this is true for any $\epsilon>0$, we obtain the expected upper bound:
\begin{equation}\label{eq:borneSupPsiu}
\PP\left(\liminf_{r\rightarrow 0} \frac{M(r)}{\psiu(r)} \leq \kappa_2^{h/\nu} \right) = 1 \ .
\end{equation}
\end{proof}

\begin{remark}

Let $(r_k)_{k\in\N}$ be defined as in (\ref{eq:defRk}) and $\beta_1 = \left(\kappa_1/(1+\epsilon)\right)^{h/\nu}$.
Using also the relation (\ref{eq:defPsiut}) and proceeding as in \emph{\textbf{1)}}, we obtain:
\begin{equation*}
\PP\left(\liminf_{k\rightarrow \infty} \frac{M(r_k)}{\psiu(r_k)} \geq \kappa_1^{h/\nu} \right) = 1 \ ,
\end{equation*}
but note that this is not sufficient to conclude that $\psiu$ is the good modulus.
\end{remark}

We end this section with a discussion on the consequences of the rate of decay of $\F$. 
To make this \emph{lim inf} result precise, one would need to find $\psiut$ explicitly, which depends only on the rate of decay of $\F$ near $0$. For instance, if we were able to prove that $\F(\x) \leq \x^\gamma$ for some $\gamma>0$, as $\x\rightarrow 0$, then it would be possible to show that $\psiut(r) = \left(\log\log (r^C)\right)^{-h/\nu}$, where $C = -\nu^{-2}(1+4h/\gamma)$, is a function for which (\ref{eq:defPsiut}) holds. Since in that case $\psiut(r) \sim \psilt(r)$ as $r\rightarrow 0$, we would get 
\begin{equation*}
\PP\left(\liminf_{r\rightarrow 0} \frac{M(r)}{r^{\nu h} (\log\log(r^{-1}))^{-h/\nu}} \in [\kappa_1^{h/\nu}, \kappa_2^{h/\nu}] \right) = 1 \ .
\end{equation*}
Note that in this situation, a $0-1$ law (which is explained in Remark \ref{rem:01} below) implies that the above limit is constant almost surely.
A faster rate would yield the same conclusion, while a slower rate for $\F$ would certainly mean that $\psilt$ converges to $0$ too quickly.

\begin{remark}{\emph{($0-1$ law of the multiparameter fBm.)}}\label{rem:01}
If one had $\F(\x)\leq \x^\gamma$, a $0-1$ law very similar to the one presented in \cite{LuanXiao2010} would hold. Indeed, letting $\mathcal{F}_n$ be the $\sigma$-algebra generated by the process $\sum_{k=n}^\infty \BB^{k}$ and $\mathcal{F}^\infty = \cap_{n\geq 1} \mathcal{F}_n$ be the tail $\sigma$-algebra, it follows from Kolmogorov's $0-1$ law that any event $A$ in $\mathcal{F}^\infty$ is trivial. Thus, if the event:
\begin{equation*}
A = \left\{\liminf_{r\rightarrow 0} M(r)/\Psi(r) = \text{constant}\right\}
\end{equation*}
belonged to $\mathcal{F}^\infty$, this would mean that there is an exact modulus in the Chung-type law. If $\F(\x)\leq \x^\gamma$, Lemma  \ref{lem:decoup}, the first part of Equation (\ref{eq:partialInc}) and Kolmogorov's continuity criterion yield that $A\in \mathcal{F}^\infty$.

\end{remark}

\section{Functional law of the iterated logarithm}\label{sec:FLIL}

We prove Theorem \ref{th:functLIL}. This proof follows closely \cite{Monrad}, with the necessary adaptations similar to the ones of the previous part. Yet we include it for completeness.\\
The following technical lemma is adapted from \cite{deAcosta83,Monrad}. The norm of $H^\nu$ (see Remark \ref{rk:RKHS}) is denoted by $\|\cdot\|_{\nu}$ and we will also abbreviate $\sup_{t\in[0,1]^\nu}|f(t)| = \|f\|_\infty$.\\
As in the previous part, we also have values for $\gamma^{(\ell)}(\varphi)$ and $\gamma^{(u)}(\varphi)$:
\begin{equation*}
\gamma^{(\ell)}(\varphi) = \frac{1}{\sqrt{2}}\kappa_1^{h/\nu} (1-\|\varphi\|_{\nu}^2)^{-h/\nu} \ \text{ and }\ \gamma^{(u)}(\varphi) = \frac{1}{\sqrt{2}}\kappa_2^{h/\nu} (1-\|\varphi\|_{\nu}^2)^{-h/\nu} \ .
\end{equation*}

\begin{lemma}\label{lem:techFLIL}
For $0<s<r<u<e^{-1}$ and $\varphi\in H^\nu$,
\begin{align*}
\left(\log \log r^{-1}\right)^{h/\nu +1/2} \|\etal_r - \varphi\|_\infty &\geq \left(\frac{s \log\log u^{-1}}{u\log\log s^{-1}}\right)^{h\nu} \left(\log\log s^{-1}\right)^{h/\nu+1/2} \|\etal_s - \varphi\|_\infty\\
&\quad - M_1 \left(\log\log u^{-1}\right)^{h/\nu+1/2} \left(\frac{u-s}{u}\right)^{h} \|\varphi\|_{\nu}\\
&\quad - \left(\log\log u^{-1}\right)^{h/\nu+1/2} \sqrt{\left(1-\left(\frac{s}{u}\right)^{2\nu h} \frac{\log\log s^{-1}}{\log\log u^{-1}} \right)} \|\varphi\|_\infty\ ,
\end{align*}
where $M_1$ is the constant in Lemma \ref{lem:compDist} which corresponds to $b=1$.
\end{lemma}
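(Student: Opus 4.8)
My plan is to treat this as a \emph{pathwise} (deterministic) inequality, valid for each fixed realisation of $\BB$, and to reduce it to the exact self-similar coupling between the rescaled processes together with the reproducing structure of $H^\nu$. The point that replaces the distributional scaling of $\BB$ by an \emph{identity} along a single path is that, for $s<r$ and any $t\in[0,1]^\nu$, both $\etal_r((s/r)t)$ and $\etal_s(t)$ are built from the very same value $\BB(st)$. Writing $L(x)=\log\log x^{-1}$, the definitions give at once
\[
\etal_r\big((s/r)t\big) = c\,\etal_s(t),\qquad c=(s/r)^{\nu h}\sqrt{L(s)/L(r)}\ .
\]
Restricting the supremum defining $\|\etal_r-\varphi\|_\infty$ to the sub-cube $(s/r)[0,1]^\nu$, evaluating at a point $t_0$ nearly maximising $|\etal_s-\varphi|$, and applying the reverse triangle inequality then isolates the leading term:
\[
\|\etal_r-\varphi\|_\infty \ \geq\ c\,\|\etal_s-\varphi\|_\infty \ -\ \sup_{t\in[0,1]^\nu}\big|c\,\varphi(t)-\varphi((s/r)t)\big|\ ,
\]
packaging both corrections into a single deterministic quantity depending only on $\varphi$.

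Next I would estimate the supremum on the right by splitting it into a \emph{spatial} and a \emph{normalisation} discrepancy, $c\,\varphi(t)-\varphi((s/r)t)=\big(\varphi(t)-\varphi((s/r)t)\big)-(1-c)\varphi(t)$. For the first piece the reproducing property of $H^\nu$ (Remark~\ref{rk:RKHS}) yields $|\varphi(t)-\varphi(t')|\leq\|\varphi\|_{\nu}\,\|k^{(\nu)}(t,\cdot)-k^{(\nu)}(t',\cdot)\|_{\nu}=\|\varphi\|_{\nu}\,d_h(t,t')$, and Lemma~\ref{lem:compDist} with $b=1$ bounds $d_h(t,(s/r)t)$ by a constant multiple of $\|t-(s/r)t\|^h\leq (1-s/r)^h\,\nu^{h/2}$; since $1-s/r\leq 1-s/u=(u-s)/u$, this produces the term $M_1\,((u-s)/u)^h\,\|\varphi\|_{\nu}$. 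The second piece is a pure normalisation gap $(1-c)\|\varphi\|_\infty$, which I would recast in the quadratic form of the statement: introducing the analogous $s$-to-$u$ coupling constant $\tilde c=(s/u)^{\nu h}\sqrt{L(s)/L(u)}$ (so that $\tilde c^{\,2}=(s/u)^{2\nu h}L(s)/L(u)$ is exactly the quantity under the square root), one uses $1-\tilde c\leq\sqrt{1-\tilde c^{\,2}}$, the clean factor appearing in the lemma.

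Finally I would multiply through by $(L(r))^{h/\nu+1/2}$ and invoke the monotonicity of $x\mapsto x$ and of $L$ on $0<s<r<u<e^{-1}$ to trade every $r$-dependent coefficient for a weaker $s$- and $u$-dependent one. For the leading term this is the verification $(s/r)^{\nu h}(L(r)/L(s))^{h/\nu}\geq (s/u)^{\nu h}(L(u)/L(s))^{h\nu}$ (the scale factor because $s/r\geq s/u$, the logarithmic factor because the base lies in $(0,1]$ and $h/\nu\leq h\nu$), which reproduces the prefactor $(sL(u)/(uL(s)))^{h\nu}$ in front of $(L(s))^{h/\nu+1/2}\|\etal_s-\varphi\|_\infty$. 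I expect the main obstacle to be precisely this three-scale bookkeeping: the naive reverse-triangle route gives the two error terms with the \emph{larger} logarithmic prefactor $(L(r))^{h/\nu+1/2}$, whereas the statement demands the \emph{smallest} one $(L(u))^{h/\nu+1/2}$, so the normalisation discrepancy must be measured through the extreme scales $s$ and $u$ (yielding $\tilde c$) rather than through the intermediate $r$, and the passage from the linear gap $1-\tilde c$ to the quadratic $\sqrt{1-\tilde c^{\,2}}$ must be shown not to be too lossy for the subsequent functional LIL argument. Once these comparisons are pinned down, the remaining estimates are routine.
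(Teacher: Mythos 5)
Your decomposition is essentially the paper's: the pathwise identity $\etal_r((s/r)t)=c\,\etal_s(t)$ obtained by restricting to the sub-cube, the reverse triangle inequality isolating $c\,\|\etal_s-\varphi\|_\infty$, the RKHS bound $|\varphi(t)-\varphi(t')|\le C\|t-t'\|^h\|\varphi\|_\nu$ via Lemma \ref{lem:compDist}, and the passage from a linear gap to $\sqrt{1-\tilde c^{\,2}}$. The genuine gap is in your final step, and it is exactly the obstacle you flag but do not resolve. Once you have expanded by the reverse triangle inequality and \emph{only then} multiply by $(\log\log r^{-1})^{h/\nu+1/2}$, both error terms carry the prefactor $(\log\log r^{-1})^{h/\nu+1/2}$, and no term-by-term ``trade'' can replace it by the smaller $(\log\log u^{-1})^{h/\nu+1/2}$ demanded by the statement. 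This is not mere bookkeeping: fix $u$, let $r\to 0$ and $s\to 0$ with $s/r\to 0$; then $c,\tilde c\to 0$, your two error terms together blow up like $(\log\log r^{-1})^{h/\nu+1/2}\,\|\varphi\|_\infty$, while the lemma's two error terms stay bounded by a constant times $(\log\log u^{-1})^{h/\nu+1/2}$. Hence your expanded inequality, although true, is strictly weaker than the lemma's bound in this regime and cannot imply it. Measuring the normalisation gap with $\tilde c$ instead of $c$ only fixes the inner coefficient ($1-c\le 1-\tilde c\le\sqrt{1-\tilde c^{\,2}}$), not the outer logarithmic factor, so it does not close the gap.

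The missing idea --- and what the paper actually does --- is an ordering switch: degrade the prefactor from the $r$-scale to the $u$-scale \emph{before} expanding, while the right-hand side is still a single nonnegative supremum. Concretely, since $\sup_t|c\,\etal_s(t)-\varphi((s/r)t)|\ge 0$ and $\log\log r^{-1}\ge\log\log u^{-1}$,
\begin{equation*}
\left(\log\log r^{-1}\right)^{h/\nu+1/2}\|\etal_r-\varphi\|_\infty\ \ge\ \left(\log\log u^{-1}\right)^{h/\nu+1/2}\,\sup_{t\in[0,1]^\nu}\bigl|c\,\etal_s(t)-\varphi((s/r)t)\bigr|\ ,
\end{equation*}
and only now does one apply the triangle inequality, so that every error term automatically inherits $(\log\log u^{-1})^{h/\nu+1/2}$; your estimates for the spatial and normalisation discrepancies then finish the proof, the leading-term comparison reducing to $u^{2\nu h}\log\log u^{-1}\ge r^{2\nu h}\log\log r^{-1}$ together with $h/\nu\le h\nu$. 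This is precisely the paper's argument in un-normalised form: it replaces the factor $(\log\log r^{-1})^{h/\nu}/r^{\nu h}$ by $(\log\log u^{-1})^{h/\nu}/u^{\nu h}$ immediately after restricting $\BB(r\cdot)$ to the sub-cube, and then expands around $a\varphi$ with $a=s^{\nu h}\sqrt{\log\log s^{-1}}$ and $b=u^{\nu h}\sqrt{\log\log u^{-1}}$. One caveat you share with the paper: both arguments implicitly use that $x\mapsto x^{2\nu h}\log\log x^{-1}$ is non-decreasing across the three scales (needed for $c\le 1$, $\tilde c\le c$ in your version, for $a\le r^{\nu h}\sqrt{\log\log r^{-1}}\le b$ in the paper's, and even for the square root in the statement to be real); this holds when the scales are small, the only regime used in the proof of Theorem \ref{th:functLIL}, but not on all of $(0,e^{-1})$.
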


\noindent For the proof of this lemma, one can refer to appendix \ref{app:proof541} .

We recall the following nice proposition from \cite{Monrad}, concerning the Gaussian measure of shifted convex sets\footnote{It existed before in the literature, in a more general form. See the references therein.}:
\begin{proposition}\label{prop:shifted}
Let $\mu$ be a Gaussian measure on a separable Banach space $E$. For any convex, symmetric, bounded and measurable subset $V$ of $E$ of positive measure, if $\varphi$ belongs to the RKHS of $\mu$, then
\begin{equation*}
\lim_{t\rightarrow \infty} t^{-2} \left( \log \mu(V + t\varphi) - \log \mu(V) \right) = -\frac{1}{2}\|\varphi\|_\mu^2 \ .
\end{equation*}
\end{proposition}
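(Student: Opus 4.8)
The plan is to combine the Cameron--Martin change-of-variables formula with the symmetry of $V$, reducing the statement to showing that a logarithmic remainder is negligible at the scale $t^2$. Write $\ell_\varphi = \mathcal{I}(\varphi)$ for the Paley--Wiener functional associated with $\varphi$; under $\mu$ this is a centred Gaussian variable with variance $\|\varphi\|_\mu^2$. The Cameron--Martin theorem gives the exact identity
\begin{equation*}
\mu(V + t\varphi) = e^{-\frac{t^2}{2}\|\varphi\|_\mu^2}\int_V e^{-t\,\ell_\varphi(x)}\ \mu(\text{d}x)\ ,
\end{equation*}
and since $V$ is symmetric, $\mu$ is symmetric and $\ell_\varphi$ is odd, the integral equals $\int_V\cosh(t\,\ell_\varphi)\ \mu(\text{d}x)$. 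Taking logarithms, it remains to prove that
\begin{equation*}
R(t) := t^{-2}\log\left(\mu(V)^{-1}\int_V\cosh(t\,\ell_\varphi)\ \mu(\text{d}x)\right)\longrightarrow 0 \ .
\end{equation*}
The bound $\cosh\geq 1$ immediately yields $R(t)\geq 0$, which already produces the $\liminf$ half of the statement; the whole difficulty is the matching upper bound $\limsup_{t} R(t)\leq 0$.

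First I would treat the favourable case $\varphi = S\xi$ with $\xi\in E^*$, where $\ell_\varphi(x) = \langle\xi,x\rangle$ is a genuine continuous functional. Since $V$ is bounded, $\Lambda := \|\xi\|_{E^*}\sup_{x\in V}\|x\|_E < \infty$ dominates $|\ell_\varphi|$ on $V$; hence $\int_V\cosh(t\ell_\varphi)\ \mu(\text{d}x)\leq\mu(V)\,e^{t\Lambda}$ and $R(t)\leq t^{-1}\Lambda\to 0$. This is exactly the point where boundedness of $V$ enters.

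For a general $\varphi$ in the RKHS I would approximate, exploiting density of $S(E^*)$: choose $\xi_n\in E^*$ with $\varphi_n := S\xi_n\to\varphi$ and set $\psi_n := \varphi-\varphi_n$, so $\|\psi_n\|_\mu\to 0$. Using $\cosh\leq e^{|\cdot|}$, the splitting $|\ell_\varphi|\leq|\ell_{\varphi_n}|+|\ell_{\psi_n}|$, and the Cauchy--Schwarz inequality,
\begin{equation*}
\int_V\cosh(t\ell_\varphi)\ \mu(\text{d}x)\leq\left(\int_V e^{2t|\ell_{\varphi_n}|}\ \mu(\text{d}x)\right)^{1/2}\left(\int_V e^{2t|\ell_{\psi_n}|}\ \mu(\text{d}x)\right)^{1/2}\ .
\end{equation*}
The first factor is controlled by the bounded-functional estimate above (contributing an $O(t^{-1})$ term to $R$), while the second is bounded by the \emph{unconditional} Gaussian Laplace transform $\int_E e^{2t|\ell_{\psi_n}|}\ \mu(\text{d}x)\leq 2\,e^{2t^2\|\psi_n\|_\mu^2}$, contributing $\|\psi_n\|_\mu^2$ in the limit. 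Thus $\limsup_{t} R(t)\leq\|\psi_n\|_\mu^2$ for every $n$, and letting $n\to\infty$ gives $\limsup_{t} R(t)\leq 0$; combined with $R(t)\geq 0$ this forces $R(t)\to 0$, which is the assertion.

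The main obstacle is precisely this passage from $S(E^*)$ to the whole RKHS. The naive attempt of treating $t(\varphi-\varphi_n)$ as a Cameron--Martin shift fails, because its size $t\|\psi_n\|_\mu$ does not vanish as $t\to\infty$ for fixed $n$; one must instead split the Laplace transform and absorb the tail as a Gaussian error term, which becomes harmless only once it is weighed at the correct $t^{2}$ scale. I would also note that, as stated, the proposition is recalled from \cite{Monrad}, so in the paper one may simply reference it; the argument above is the self-contained version.
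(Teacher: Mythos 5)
Your proof is correct, but there is no proof in the paper to compare it against: Proposition \ref{prop:shifted} is stated there as a quoted result from \cite{Monrad} (with a footnote pointing out that it exists in the literature in more general form), so the "paper's proof" is a citation. Your argument is essentially the standard self-contained one lying behind that citation (see also \cite{deAcosta83}): the Cameron--Martin identity $\mu(V+t\varphi)=e^{-\frac{t^2}{2}\|\varphi\|_\mu^2}\int_V e^{-t\,\mathcal{I}(\varphi)}\,\mathrm{d}\mu$ with the correct sign; symmetrization to replace the exponential by $\cosh$, which gives the one-sided bound $R(t)\geq 0$ (equivalently the classical shift inequality $\mu(V+t\varphi)\geq e^{-\frac{t^2}{2}\|\varphi\|_\mu^2}\mu(V)$) for free; and, for the harder upper bound, approximation of $\varphi$ by $S\xi_n$, $\xi_n\in E^*$, boundedness of $V$ to control the continuous functional $\langle\xi_n,\cdot\rangle$ on $V$ at scale $O(t)$, and Cauchy--Schwarz plus the unrestricted Gaussian Laplace transform $\int_E e^{2t|\mathcal{I}(\psi_n)|}\,\mathrm{d}\mu\leq 2e^{2t^2\|\psi_n\|_\mu^2}$ to absorb the defect at cost $\|\psi_n\|_\mu^2$, which vanishes as $n\to\infty$. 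All steps check out; the only point worth one explicit sentence in a polished write-up is that $\mathcal{I}(\varphi)$ is odd $\mu$-almost surely (being an $L^2(\mu)$-limit of the odd continuous functionals $\langle\xi_n,\cdot\rangle$ under the measure-preserving map $x\mapsto -x$). Two further remarks. First, your proof never uses convexity of $V$ --- only symmetry, boundedness, measurability and positive measure --- so you actually establish a slightly more general statement, which is consistent with the paper's footnote. Second, your diagnosis of where boundedness is indispensable is accurate: a measurable linear functional $\mathcal{I}(\varphi)$ for general $\varphi$ in the RKHS need not be bounded on the bounded set $V$, so one cannot run the $\varphi=S\xi$ argument directly, and treating $t\psi_n$ as a Cameron--Martin shift indeed fails because $t\|\psi_n\|_\mu$ does not vanish for fixed $n$; splitting the Laplace transform and weighing the Gaussian error at the $t^2$ scale is the right mechanism.
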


\begin{proof}[Proof of Theorem \ref{th:functLIL}]
This proof is divided into two parts: the first one to give the lower bound on $\gamma(\varphi)$, and the second one for the upper bound. 

\vspace{0.2cm}

\hspace{0.5cm} \textbf{I) Proof of the lower bound}

Let $\epsilon >0$ and $\gamma_1$ defined by:
\begin{equation*}
\gamma_1 = \left(\frac{\kappa_1}{(1+\epsilon)}\right)^{h/\nu} (1-\|\varphi\|_{\nu}^2)^{-h/\nu} \ .
\end{equation*}
Recall that $\psilt(r) = (\log\log r^{-1})^{-h/\nu}$, so that the following events, defined for $k\in \N$ by:
\begin{equation*}
A_k = \left\{ \psilt(r_k)^{-1-\nu/2h} \|\etal_{r_k} - \varphi\|_\infty \leq \gamma_1 \right\}
\end{equation*}
for some decreasing sequence $r_k$ (explicited later), will be written:
\begin{equation*}
A_k = \left\{ \left\|r_k^{-\nu h} \BB(r_k\cdot) - \sqrt{2 \log\log r_k^{-1}} \varphi\right\|_\infty \leq \gamma_1 (\log\log r_k^{-1})^{-h/\nu} \right\} \ .
\end{equation*}

\noindent Let $\delta>0$ and $\delta< \epsilon(1-\|\varphi\|_{\nu}^2)$. By Proposition \ref{prop:shifted}, and then by the small deviations of $\BB$, we have for $k$ large enough (depending on $\delta$),
\begin{align*}
\log \PP(A_k) &\leq \log \PP\left(\sup_{t\in [0,1]^\nu} |\BB(r_k t)|\leq \gamma_1 r_k^{h\nu} (\log\log r_k^{-1})^{-h/\nu} \right) - (\log\log r_k^{-1} ) (\|\varphi\|_{\nu}^2 - \delta) \\
&\leq -(1+\epsilon)(1-\|\varphi\|_{\nu}^2) (\log\log r_k^{-1})- (\log\log r_k^{-1} ) (\|\varphi\|_{\nu}^2 - \delta) \ .
\end{align*}
This implies that
\begin{equation*}
\PP(A_k) \leq \exp\left\{ -\left(1+\epsilon(1 -\|\varphi\|_{\nu}^2) -\delta\right) \log\log r_k^{-1} \right) \ .
\end{equation*}
Now put:
\begin{equation*}
r_k = \exp\left\{ -k y(k)\right\} \ ,
\end{equation*}
where
\begin{equation*}
y(k) = \frac{\log\log k}{(\log k)^{h^{-1} +1}} \ .
\end{equation*}
Since $\delta$ was chosen appropriately, $\epsilon(1 -\|\varphi\|_{\nu}^2) -\delta$ is positive, and 
\begin{equation*}
\sum^\infty \PP(A_k) <\infty \ ,
\end{equation*}
where the sum is over $k$ large enough, according to the previous remarks. Therefore, almost surely,
\begin{equation*}
\liminf_{k\rightarrow \infty} \left(\log\log r_k^{-1}\right)^{h/\nu+1/2} \sup_{t\in [0,1]^\nu} |\eta_{r_k}(t) -\varphi(t)| \geq \frac{1}{\sqrt{2}}\gamma_1 \ .
\end{equation*}
To obtain the result for $r\rightarrow 0$, we use Lemma \ref{lem:techFLIL} with $u=r_k$, $s=r_{k+1}$ and $r$ in between. Then
\begin{align}
\left(\log \log r^{-1}\right)^{h/\nu +1/2} \|\eta_r - \varphi\|_\infty &\geq \left(\frac{r_{k+1} \log\log r_k^{-1}}{r_k\log\log r_{k+1}^{-1}}\right)^{h\nu} \left(\log\log r_{k+1}^{-1}\right)^{h/\nu+1/2} \|\eta_{r_{k+1}} - \varphi\|_\infty \tag{$\star$}\label{eq:techFLIL1}\\
&\quad - M_1 \left(\log\log r_k^{-1}\right)^{h/\nu+1/2} \left(\frac{r_k-r_{k+1}}{r_k}\right)^{h} \|\varphi\|_{\nu} \tag{$\star\star$}\label{eq:techFLIL2}\\
&\quad - \left(\log\log r_k^{-1}\right)^{h/\nu+1/2} \sqrt{\left(1-\left(\frac{r_{k+1}}{r_k}\right)^{2\nu h} \frac{\log\log r_{k+1}^{-1}}{\log\log r_k^{-1}} \right)} \|\varphi\|_\infty\ . \tag{$\star\star\star$} \label{eq:techFLIL3}
\end{align}
Note that by the inequality $e^{-x}\geq 1-x$, and the decrease of $y(k)$ (for $k$ large),
\begin{align*}
\frac{r_{k+1}}{r_k} &\geq 1-\left\{y(k+1)\log\left(y(k+1)\right) - y(k)\log\left(y(k)\right)\right\}\\
&\geq 1-y(k+1) \ .
\end{align*}
Thus, the ratio in Equation (\ref{eq:techFLIL1}) converges to $1$. Likewise, the ratio in (\ref{eq:techFLIL2}) is smaller than $y(k+1)^h$, so that:
\begin{align*}
\left(\frac{r_k-r_{k+1}}{r_k}\right)^{h} \left(\log\log r_k^{-1}\right)^{h/\nu+1/2}  &\leq y(k+1)^h \left(\log \left(k y(k)\right)\right)^{h/\nu+1/2}\\
&\leq \frac{\left(\log\log(k+1)\right)^h}{\left(\log(k+1)\right)^{h+1}} \ (\log k)^{h/\nu+1/2} \left(1 + \frac{\log y(k)}{\log k}\right)^{h/\nu+1/2} \ ,
\end{align*}
which clearly goes to $0$. For the last term (\ref{eq:techFLIL3}),
\begin{align*}
\left(\log\log r_k^{-1}\right)^{2h/\nu+1} &\left(1-\left(\frac{r_{k+1}}{r_k}\right)^{2\nu h} \frac{\log\log r_{k+1}^{-1}}{\log\log r_k^{-1}} \right) \\
&\leq \left(\log\log r_k^{-1}\right)^{2h/\nu} \bigg\{ \log(k y(k)) - \log\left((k+1)y(k+1)\right) \\
&\quad\quad\quad\quad\quad\quad\quad\quad\quad\quad\quad\quad+ \log\left((k+1)y(k+1)\right) \left[1-\left(1-y(k+1)\right)^{2h\nu}\right]  \bigg\}  \\
&\leq \left(\log\log r_k^{-1}\right)^{2h/\nu} \bigg\{ \log(k y(k)) - \log\left((k+1)y(k+1)\right) \\
&\quad\quad\quad\quad\quad\quad\quad\quad\quad\quad\quad\quad+ 2\nu h\ y(k+1)\ \log\left((k+1)y(k+1)\right) \bigg\} \ .
\end{align*}
One can show that $\log\left(k y(k)\right) - \log\left((k+1) y(k+1)\right) \sim -k^{-1}$, thus
\begin{equation*}
\left(\log\log r_k^{-1}\right)^{2h/\nu} \left\{ \log(k y(k)) - \log\left((k+1)y(k+1)\right) \right\}
\end{equation*}
converges to $0$, and so does the remaining term, since:
\begin{align*}
\left(\log\log r_k^{-1}\right)^{2h/\nu}\ y(k+1)\ \log\left((k+1)y(k+1)\right) \sim \left(\log k\right)^{2h/\nu-1-h^{-1}+1}\  \log\log(k+1) 
\end{align*}
and the sum of the exponents $1+2h/\nu-h^{-1}-1$ is strictly negative ($\nu\geq 1$ and $h<1/2$).

\vspace{0.2cm}

\hspace{0.5cm} \textbf{II) Proof of the upper bound}
\ 
\vspace{0.2cm}

The proof of Theorem \ref{th:LIL} and Proposition \ref{prop:shifted} allow to make a quick proof for this bound.
Let us define $\gamma_2 = \kappa_2^{h/\nu} \left(1-\|\varphi\|_{\nu}^2\right)^{-h/\nu}$, and put $r_k$ and $a_k$ as in steps $2)$ and $3)$ of the proof of the LIL. Again, let $\BB^{k}$ and $\tilde{\BB}^{k}$ be the processes defined by (\ref{eq:defBhk}) and (\ref{eq:defBhktilde}). As in \cite{Monrad}, we define the following events, for any $\epsilon>0$:
\begin{align*}
&A_k(\epsilon) = \left\{\Big\|r_k^{-\nu h} \BB(r_k\cdot) - \sqrt{2} \left(\psiut(r_k)\right)^{-\nu/2h}\varphi\Big\|_\infty \leq \gamma_2 (1+\epsilon)\ \psiut(r_k)\right\} \\
&B_k(\epsilon) = \left\{\Big\|r_k^{-\nu h} \BB^{k}(r_k\cdot) - \sqrt{2} \left(\psiut(r_k)\right)^{-\nu/2h}\varphi\Big\|_\infty \leq \gamma_2 (1+\epsilon)\ \psiut(r_k)\right\} \\
&C_k(\epsilon) = \left\{\left\|r_k^{-\nu h} \tilde{\BB}^{k}(r_k\cdot)\right\|_\infty \geq \gamma_2 \epsilon\ \psiut(r_k)\right\} \ .
\end{align*}

\noindent This time, apply Theorem \ref{th:smallBalls} and Proposition \ref{prop:shifted} to deduce the existence of a small $\delta>0$ such that for $k$ large enough, the following lower bound on the probability of the event $A_k(\epsilon)$ holds:
\begin{align*}
\log\PP(A_k(\epsilon)) &\geq \log\PP\left(\sup_{t\in[0,1]^\nu}|\BB(r_k t)|\leq \gamma_2(1+\epsilon) r_k^{h\nu} \psiut(r_k)\right) - \left(\psiut(r_k)\right)^{-\nu/h}\ \left(\|\varphi\|_{\nu}^2+\delta\right) \\
&\geq -(1+\epsilon)^{-\nu/h} \left(1-\|\varphi\|_{\nu}^2\right) \left(\psiut(r_k)\right)^{-\nu/h}- \left(\psiut(r_k)\right)^{-\nu/h}\ \left(\|\varphi\|_{\nu}^2+\delta\right) \\
&\geq -\log k\ \left((1+\epsilon)^{-\nu/h} \left(1-\|\varphi\|_{\nu}^2\right) - \left(\|\varphi\|_{\nu}^2+\delta\right)\right) \ .
\end{align*}
Therefore, choosing $\delta$ small enough to ensure that $-(1+\epsilon)^{-\nu/h} \left(1-\|\varphi\|_{\nu}^2\right)-  \left(\|\varphi\|_{\nu}^2+\delta\right)$ is greater than $-1$ implies that:
\begin{align*}
\sum_{k=1}^\infty \PP(A_k(\epsilon)) &\geq \sum_{k=1}^\infty k^{-(1+\epsilon)^{-\nu/h} \left(1-\|\varphi\|_{\nu}^2\right)-  \left(\|\varphi\|_{\nu}^2+\delta\right)} \\
&=\infty \ .
\end{align*}

\noindent All that remains to notice is that:
\begin{align*}
A_k(\epsilon)\subset B_k(2\epsilon)\ \cup\ C_k(\epsilon) \subset A_k(3\epsilon)\ \cup\ C_k(\epsilon) \ ,
\end{align*}
and that the choice of $a_k$ and $r_k$ implies that $\sum \PP(C_k(\epsilon))<\infty$ (as in the proof of Theorem \ref{th:LIL}). The rest follows strictly the proof of \cite{Monrad}.
\end{proof}

As in Remark \ref{rem:01}, if $\F$ were proven to have fast decay, the same $0-1$ law that we used for the Chung law would give the same conclusion, i.e. that there is a constant between $\gamma^{(\ell)}(\varphi)$ and $\gamma^{(u)}(\varphi)$ such that almost surely:
\begin{equation*}
\liminf_{r\rightarrow 0^+} \ (\log\log(r^{-1}))^{h/\nu+1/2} \sup_{t\in[0,1]^\nu}|\etal_r(t) - \varphi(t)| = \gamma(\varphi) \ .
\end{equation*}

We end this part on laws of the iterated logarithm with a remark concerning the previous result when $\|\varphi\|_{\nu} = 1$. This case was studied a lot in the literature, as it yields a different rate of convergence. In fact, for $\|\varphi\|_{\nu} = 1$, part {\bfseries I)} of the previous proof can be directly adapted to give:
\begin{equation*}
\liminf_{r\rightarrow 0^+} \ (\log\log(r^{-1}))^{h/\nu+1/2} \sup_{t\in[0,1]^\nu}|\eta_r(t) - \varphi(t)| = \infty \quad \text{a.s.} 
\end{equation*}
The exact rate was computed in many situations and it is likely that standard techniques (as in \cite{deAcosta83,Monrad}) and the present spectral representation and small deviations will permit to compute the exact rate in the functional law of the iterated logarithm on the unit sphere for the multiparameter fBm.

\section*{Conclusive remarks}

The method to prove a Chung-type LIL often relies on the same estimates as the method to compute the exact Hausdorff measure of the range of a Gaussian process with stationary increments (see \cite[Prop. 3.1]{Xiao96} and \cite[Prop. 4.1]{Talagrand95}, and \cite{Falconer} on Hausdorff measures and Hausdorff dimension).\\
Let $\BB^{(d)}$ be a $d$-dimensional multiparameter fBm. If $\BB^{(d)}$ behaved away from the axes as it behaves at $0$, then the Hausdorff dimension of $\BB^{(d)}([0,1]^\nu)$ would be equal to $h^{-1}\wedge d$. 
However, this is not the case by the $\sigma$-stability of the Hausdorff dimension, since it is easy to find a set $A\subset [0,1]^\nu$ such that $\dH\left(\BB^{(d)}(A)\right) = \frac{\nu}{h}\wedge d\ a.s.$, which is strictly larger than $h^{-1}$ when $h<d$ and $\nu>1$ (see \cite[pp. 134-136]{RichardThesis}). Thus the behaviour away from the axes prevails and $\dH\left(\BB^{(d)}([0,1]^\nu)\right) = \frac{\nu}{h}\wedge d\ \ a.s.$

Another aspect of the singular behaviour of $\BB$ at $0$ is linked to its sample path regularity. Indeed, the two standard methods for measuring the exact Hölder continuity at one point yield different results. Namely, let $B_+(t_0,\rho)$ denote the intersection of $[0,1]^\nu$ with the Euclidean ball centred in $t_0$ with radius $\rho$, and let $\nu h<1$. Then, the \emph{pointwise} Hölder exponent at $t_0\in [0,1]^\nu$, denoted by $\alphar_{\BB}(t_0)$, and the \emph{local} Hölder exponent $\widetilde{\alphar}_{\BB}(t_0)$, satisfy almost surely:
\begin{align*}
&\alphar_{\BB}(t_0):=\sup\left\{ \alpha>0:\;\limsup_{\rho\rightarrow 0}\sup_{s,t\in B_+(t_0,\rho)} \frac{|\BB(t)-\BB(s)|}{\rho^{\alpha}}<\infty \right\} = \left\{
\begin{tabular}{ll}
$h$ & if $t_0\neq 0$\\
$\nu h$ & if $t_0=0$
\end{tabular}
\right. \\
&\widetilde{\alphar}_{\BB}(t_0):=\sup\left\{ \alpha>0:\;\limsup_{\rho\rightarrow 0}\sup_{s,t\in B_+(t_0,\rho)} \frac{|\BB(t)-\BB(s)|}{|t-s|^{\alpha}}<\infty \right\} 
 = h\ .
\end{align*}

\noindent Hence, the local and pointwise random Hölder exponents (we refer to \cite{ehjlv} for complementary information on these exponents, and \cite{RichardThesis} for a proof of the above result) differ almost surely at $0$. This behaviour is typical of functions which have oscillations and variations at different scales, such as the chirp function $f:x\mapsto |x|^\alpha \sin\left(|x|^{-\beta}\right)$, where $\alpha\in(0,1),\ \beta>0$.
It is possible that the pointwise Hölder exponent captures the local oscillations in Chung-type LILs, while the local exponent cannot. This could provide a Chung-type law for a wide class of Gaussian processes having the local nondeterminism property and a spectral representation similar to the multiparameter fBm.

\paragraph{Acknowledgements.} The author is grateful to the anonymous referees who helped him improve the quality and organization of this paper.

\begin{appendices}

\titleformat{\section}[hang]{\large \normalfont \scshape}{Appendix \thesection}{10pt}{}

\section{Proof of Lemma \ref{lem:techFLIL}}\label{app:proof541}

For the original proof, see Lemma 5.3 of \cite{deAcosta83}. We make here the necessary modifications.

\begin{align*}
\left(\log\log r^{-1}\right)^{h/\nu+1/2} \|\etal_r - f\|_\infty &= \frac{(\log\log r^{-1})^{h/\nu}}{r^{\nu h}} \left\|\BB(r\cdot) - r^{\nu h}\sqrt{\log \log r^{-1}} f\right\|_\infty \\
&\geq \frac{(\log\log r^{-1})^{h/\nu}}{r^{\nu h}} \left\|\BB(s\cdot) - r^{\nu h}\sqrt{\log \log r^{-1}} f\left(\frac{s}{r}\cdot\right)\right\|_\infty \\
& \geq \frac{(\log\log u^{-1})^{h/\nu}}{u^{\nu h}} \left\|\BB(s\cdot) - r^{\nu h}\sqrt{\log \log r^{-1}} f\left(\frac{s}{r}\cdot\right)\right\|_\infty \ .
\end{align*}
Now choosing $a= s^{\nu h}\sqrt{\log\log s^{-1}}$ and $b=u^{\nu h}\sqrt{\log\log u^{-1}}$,
\begin{equation*}
\left\|\BB(s\cdot) - r^{\nu h}\sqrt{\log \log r^{-1}} f\left(\frac{s}{r}\cdot\right)\right\|_\infty \geq \left\|\BB(s\cdot) - a f\right\|_\infty - b\left\|f-f\left(\frac{s}{r}\cdot\right) \right\|_\infty - (b-a)\|f\|_\infty 
\end{equation*}
and we find a bound for each of the last two terms (the first one is exactly the one given in the Lemma). We need the following inequality for $f\in H^\nu$, $s,t\in[0,1]^\nu$:
\begin{equation*}
|f(s)-f(t)|^2 \leq M_1 \|s-t\|^{2h} \|f\|_{\nu}^2 \ ,
\end{equation*}
which follows from approximation of $f$ by linear combinations of simple functions of the form $\lambda(\mathbf{1}_{[0,t_i]}\mathbf{1}_{[0,\cdot]})$ and the upper bound in Lemma \ref{lem:compDist} (where the constant $M_1$ comes from). Thus,
\begin{equation*}
- b \frac{(\log\log u^{-1})^{h/\nu}}{u^{\nu h}} \left\|f-f\left(\frac{s}{r}\cdot\right) \right\|_\infty \geq -M_1 \left(\log\log u^{-1}\right)^{h/\nu+1/2} \left(1-\frac{s}{u}\right)^h \|f\|_{\nu} \ .
\end{equation*}
For the last term, we use the fact that:
\begin{equation*}
b-a \leq \sqrt{u^{2\nu h} \log\log u^{-1} - s^{2\nu h}\log \log s^{-1}} \ ,
\end{equation*}
which ends the proof of this lemma.

\end{appendices}

\end{document}